\documentclass[12pt,a4paper,leqno]{amsart}
\usepackage{amssymb} 
\usepackage{mathrsfs} 
\usepackage{newtxtext}
\usepackage[varg]{newtxmath}
\usepackage[shortlabels]{enumitem} 
\usepackage{bm}
\usepackage[colorlinks=false]{hyperref}
\usepackage{graphicx, xcolor}

\usepackage{constants}
\newtheorem{theorem}{Theorem}[section]

\newtheorem{lemma}[theorem]{Lemma}

\newtheorem{remark}[theorem]{Remark}

\newtheorem{prop}[theorem]{Proposition}
\newtheorem{lem}[theorem]{Lemma}

\DeclareMathOperator{\Div}{div}
\DeclareBoldMathCommand{\bu}{u}
\DeclareBoldMathCommand{\bv}{v}
\DeclareBoldMathCommand{\bnu}{\nu}
\newcommand{\unknown}{\rho}
\let\unk\unknown
\newcommand{\F}{\mathcal{F}}
\newcommand{\D}{\mathcal{D}}

\newcommand{\R}{\mathbb{R}}
\newcommand{\dcenter}{d_{\text{center}}}
\newcommand{\dcoeff}{d_{\text{coeff}}}
\newcommand{\dosc}{d_{\text{osc}}}

\numberwithin{equation}{section}
\usepackage{mathtools}
\mathtoolsset{showonlyrefs=true}

\usepackage{amsrefs}

\title{Long-time asymptotics of nonlinear Fokker-Planck equations with inhomogeneous diffusion and natural boundary conditions}
\author{Kouta Araki}
\address{Department of Mathematics, Graduate School of Science and Technology, Nihon University, Tokyo 101-8308 JAPAN}
\email{csku24001@g.nihon-u.ac.jp}

\author{Masashi Mizuno}
\address{Department of Mathematics, College of Science and Technology,  Nihon University, Tokyo
101-8308 JAPAN}
\email{mizuno.masashi@nihon-u.ac.jp}
\subjclass[2020]{Primary~35B40, Secondary~35A15, 35A09, 35B09, 35K20, 35K55, 35K65, 35Q84}
\keywords{Nonlinear Fokker-Planck equation; The law of energy dissipation; Entropy dissipation methods; Long-time asymptotic behavior}
\allowdisplaybreaks[1]
\begin{document}

\begin{abstract}
    In this paper, we study the long-time asymptotics for nonlinear Fokker-Planck equations with spatial inhomogeneous diffusion, based on the law of energy dissipation. First, we derive a priori positive lower and upper bounds for solutions of the equations. Next, we give a sufficient condition for deriving the exponential decay of the dissipation function. Finally, we investigate whether the given sufficient condition is essential or not by numerical computation.
\end{abstract}

\maketitle

\section{Introduction}
\label{Inhomogeneous}

\subsection{Fokker-Planck equations with inhomogeneous diffusion}
Let $\Omega \subset \R^n$ be a bounded convex domain with smooth boundary in the $n$-dimensional Euclidean space, $\bnu$ be the outer unit normal vector on $\partial \Omega$. 
We consider the following initial-boundary value problem for the nonlinear Fokker-Planck equation:
\begin{equation}
    \label{eq:Nonlinear-Fokker-Planck}
    \tag{NFP}
    \left\{
    \begin{aligned}
        \frac{\partial \unknown}{\partial t}-\Div(\unknown\nabla (d(x)\log\unknown+\phi(x))) &=0, \quad && x \in \Omega, \quad t>0, 
        \\
        \unknown(0,x)&=\unknown_0(x) ,\quad &&x\in \Omega ,
        \\
        \unknown\nabla (d(x)\log\unknown+\phi(x))\cdot\bnu&=0,\quad &&x \in \partial \Omega,\quad t>0. 
    \end{aligned}
    \right.
\end{equation}
Here $d: \Omega \rightarrow \mathbb{R}$ is a given positive function, $\phi: \Omega \rightarrow \mathbb{R}$ is a given function, $\unk_0: \Omega \rightarrow \mathbb{R}$ is given initial data, and $\unk: \Omega \times [0, \infty) \rightarrow \mathbb{R}$ is an unknown function. \eqref{eq:Nonlinear-Fokker-Planck} 
was derived by \cites{MR4526584,MR4506846,arXiv:epshteyn2024longtimeasymptoticbehaviornonlinear} as a mathematical model to investigate grain boundary motion with absolute temperature. We briefly explain how to deduce \eqref{eq:Nonlinear-Fokker-Planck} from the law of energy dissipation.

Let us define the free energy of density function $\unk$ by
\begin{equation}
    \F[\unk](t) := \int_{\Omega} (d(x)(\log \unk - 1) + \phi(x))\unk \, dx.
\end{equation}
We impose the conservation law of mass, that is,
\begin{equation}
    \label{eq:conservation_mass}
    \unk_t + \Div(\unk \bu) = 0,
\end{equation}
for a velocity vector field $\bu$, and the law of energy dissipation of the form
\begin{equation}
    \frac{d}{dt}\F[\unk](t) = -\int_{\Omega} |\bu(x,t)|^2 \unk(x,t) \, dx =: -\D[\unk](t).
\end{equation}
Now, computing the rate of the free energy together with the conservation law, we have
\begin{align}
    \frac{d}{dt}\F[\unk](t) 
    &= 
    \int_{\Omega} (d(x)\log \unk + \phi(x))\unk_t \, dx \\
    &= -\int_{\Omega} (d(x)\log \unk + \phi(x))\text{div}(\unk \bu) \, dx.
\end{align}
Imposing the natural boundary condition
\begin{equation}
    \unk \bu \cdot \bnu|_{\partial\Omega} = 0, \quad t > 0,
\end{equation}
we have
\begin{equation}
    -\int_{\Omega} (d(x)\log \unk + \phi(x))\cdot \Div(\unk \bu) \, dx = \int_{\Omega} \nabla(d(x)\log \unk + \phi(x)) \cdot \bu\unk \, dx.
\end{equation}
To ensure the law of energy dissipation, we impose
\begin{equation}
    \label{eq:1.velocty_vector}
    \bu= -\nabla(d(x)\log \unk + \phi(x)),
\end{equation}
and we have \eqref{eq:Nonlinear-Fokker-Planck} by plugging the velocity vector field \eqref{eq:1.velocty_vector} into the conservation law of mass \eqref{eq:conservation_mass}.

In this paper, we study the long-time behavior of \eqref{eq:Nonlinear-Fokker-Planck} from the point of view of the law of energy dissipation. This argument can be seen in \cite{MR1833000} or \cite{MR4526584} to derive mathematical models of the grain boundary motion. In \cite{MR4526584}, $\unk$ represented the density of the grain boundaries in terms of the misorientations and the triple junctions, $\phi$ represented the grain boundary energy, and $d$ represented a fluctuation parameter.

Since $d$ is a fluctuation parameter, it would depend on the absolute temperature, but from the stochastic model, it is difficult to derive the law of energy dissipation. In contrast, the authors in \cite{MR4506846} derived the model \eqref{eq:Nonlinear-Fokker-Planck} from the law of energy dissipation to ensure it and included the spatially inhomogeneous diffusion.

When $d$ is a positive constant, this problem is related to the law of entropy dissipation; for instance, one can think of $\F[\unk](t)$ as the entropy of \eqref{eq:Nonlinear-Fokker-Planck}. If $\phi$ is strictly convex and $\F[\unk_0]$ is finite, then we can deduce the exponential decay of $\D[\unk](t)$ and derive the asymptotic convergence of $\unk$ as $t \rightarrow \infty$ in $L^1$ space \cites{MR1812873, MR1639292, MR1842428}.

On the other hand, when $d$ is a positive function, \cite{MR4506846} and \cite{arXiv:epshteyn2024longtimeasymptoticbehaviornonlinear} studied the exponential decay of $\D[\unk](t)$ for the periodic boundary value problem of \eqref{eq:Nonlinear-Fokker-Planck}, not the natural boundary value problem. From the point of view of the grain boundary motion, in mathematical analysis, periodic boundary conditions are easy to handle the boundary terms, but in the modelling, the natural boundary conditions are important to study the phenomenon. Thus, our setting is closer to the natural modelling. However, we need to investigate the boundary integral mathematically from the integration by parts. In this paper, we observe and estimate each boundary integral.

\subsection{Main results}

Here we explain the main results of this paper. Throughout this paper, we assume that $d$ is a given $C^2$ function on $\overline{\Omega}$ such that there exist positive constants $d_{\max}, \Cl{const:d>c}>0$ such that 
\begin{equation}
\label{assume:d>c}
   d_{\max}\ge d(x)\ge \Cr{const:d>c}=:d_{\min}
\end{equation}
for $x\in\Omega$. Next, we assume that $\phi$ is a given $C^2$ strictly convex function on $\overline{\Omega}$, that is, $\nabla^2\phi\ge \lambda I$ for some $\lambda>0$, where $I$ is the identity matrix. And we assume that $\unknown_0$ is a given positive $C^2$ probability density function on $\overline{\Omega}$, that is, 
\begin{equation}
\label{as:unknown_0_is_PDF}
    \int_\Omega \unknown_0(x)\,dx=1.
\end{equation}
Note that any classical solution $\unk$ of \eqref{eq:Nonlinear-Fokker-Planck} satisfies
\begin{equation}
\label{eq:f_pdf}
    \int_\Omega \unknown\,dx=\int_\Omega \unknown_0\,dx
\end{equation}
by taking integration of the first equation of \eqref{eq:Nonlinear-Fokker-Planck}, using the divergence theorem, and applying the boundary condition of \eqref{eq:Nonlinear-Fokker-Planck}.

We first give a sufficient condition that $\unk(t,x)$ is positive for any $x\in\Omega$ and $t>0$.

\begin{theorem}
\label{thm:apriori_estimate}
    Let $\unknown$ be a classical solution of \eqref{eq:Nonlinear-Fokker-Planck}. Then, there exist positive constants $\Cl{const:f>c}, \Cl{const:f<c}>0$ 
    depending only on $d$, $\phi$, and $\unk_0$
    such that 
    \begin{equation}
    \label{eq:apriori}
        \Cr{const:f>c} \le \unknown \le \Cr{const:f<c}.
    \end{equation}
\end{theorem}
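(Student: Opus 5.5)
The plan is to pass to the ``chemical potential'' $w:=d(x)\log\unk+\phi(x)$ and apply a parabolic maximum principle to $w$ rather than to $\unk$. The point is that the flux in \eqref{eq:Nonlinear-Fokker-Planck} is $\unk\nabla w$, so the natural boundary condition becomes the homogeneous Neumann condition $\nabla w\cdot\bnu=0$ on $\partial\Omega$ (since $\unk>0$ for a classical solution with $\log\unk$ defined). Also, the stationary states are exactly those with $w$ constant.

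\textbf{Equation for $w$.} Since $w_t=d\,\unk_t/\unk$ and $\unk_t=\Div(\unk\nabla w)=\unk\Delta w+\nabla\unk\cdot\nabla w$, we get
\begin{equation}
    w_t = d(x)\,\Delta w + \bigl(d(x)\nabla\log\unk\bigr)\cdot\nabla w
    \quad\text{in }\Omega,\qquad \nabla w\cdot\bnu=0 \quad\text{on }\partial\Omega.
\end{equation}
Here $\nabla\log\unk=\nabla\bigl((w-\phi)/d\bigr)$. For a fixed classical solution, I will regard $b(x,t):=d\nabla\log\unk$ as a given continuous drift on each $\overline\Omega\times[0,T]$. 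Then $w$ solves a linear, uniformly parabolic equation (by \eqref{assume:d>c}) with no zeroth-order term and homogeneous Neumann data.

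\textbf{Maximum principle.} I claim that
\begin{equation}
    \min_{\overline\Omega} w(\cdot,0)\le w(x,t)\le \max_{\overline\Omega} w(\cdot,0)\qquad\text{for all }(x,t).
\end{equation}
For the upper bound, set $w^\varepsilon:=w-\varepsilon t$ and suppose $w^\varepsilon$ attains a value above $\max w(\cdot,0)$ on some $\overline\Omega\times[0,T]$. Take the first time and a point where this maximum is attained.
\begin{itemize}
\item If the point is interior, then $\nabla w=0$, $\Delta w\le0$ and $w^\varepsilon_t\ge0$. This gives $-\varepsilon\ge 0$, a contradiction.
\item If the point lies on $\partial\Omega$, Hopf's lemma for the linear parabolic operator $\partial_t-d\Delta-b\cdot\nabla$ forces $\partial_{\bnu}w>0$ there, contradicting the Neumann condition. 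Alternatively, one can argue directly at a strict boundary maximum with the usual barrier.
\end{itemize}
Letting $\varepsilon\to0$ gives the upper bound; the lower bound is symmetric.

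\textbf{Conclusion.} Since $\log\unk=(w-\phi)/d$, we obtain
\begin{equation}
    \frac{\min_{\overline\Omega}(d\log\unk_0+\phi)-\max_{\overline\Omega}\phi}{d(x)}\le\log\unk\le\frac{\max_{\overline\Omega}(d\log\unk_0+\phi)-\min_{\overline\Omega}\phi}{d(x)}.
\end{equation}
Combined with $d_{\min}\le d\le d_{\max}$, this yields constants $\Cr{const:f>c},\Cr{const:f<c}$ depending only on $d$, $\phi$ and $\unk_0$. These are finite because $\unk_0$ is positive and continuous on $\overline\Omega$.

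\textbf{Main obstacle.} I expect the main difficulty to be the boundary case of the maximum principle. There one needs the regularity assumptions (smooth $\partial\Omega$, continuity of $b$ up to the boundary for a classical solution, which I will assume includes $C^1$ regularity in $x$ up to $\partial\Omega$) to justify Hopf's lemma, because the drift depends on the solution itself. If that justification is delicate, I would first try to use the convexity of $\Omega$ and a local barrier of the form $e^{-\alpha|x-x_0|^2}$ near the boundary point to derive the contradiction directly.
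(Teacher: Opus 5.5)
Your proof is correct. It shares the paper's starting point, the change of variables to $\mu=d\log\unk+\phi$ plus a Neumann maximum principle, but it handles the quadratic term differently, which changes how the upper bound is obtained.

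\textbf{The paper's route.} The paper writes the equation as $\mu_t-d\Delta\mu+(\log\unk\,\nabla d+\nabla\phi)\cdot\nabla\mu=|\nabla\mu|^2$. This gives only a one-sided inequality.
\begin{itemize}
\item The lower bound comes from dropping $|\nabla\mu|^2\ge0$.
\item The upper bound needs the transform $\eta=e^{\beta\mu}$ with $\beta>1/d_{\min}$, so that the leftover term $\beta\eta(1-d\beta)|\nabla\mu|^2$ is nonpositive.
\end{itemize}

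\textbf{Your route.} You stop at the first line of the paper's computation of $\mu_t$ and absorb the quadratic term into the drift, via
$|\nabla\mu|^2-(\log\unk\nabla d+\nabla\phi)\cdot\nabla\mu=d\nabla\log\unk\cdot\nabla\mu$.
Then $w$ solves a linear equation with equality, so a single two-sided maximum principle gives both bounds. You get the same bounds $\min\mu(\cdot,0)\le\mu\le\max\mu(\cdot,0)$ as the paper, with no exponential trick.

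\textbf{The trade-off.} Your drift $b=d\nabla\log\unk$ involves $\nabla\unk$, whereas the paper's drift involves only $\unk$. So your Hopf step needs $\nabla\unk$ to be bounded near the lateral boundary point $(x_0,t_0)$ with $t_0>0$. As you note, this is part of what a classical solution of the no-flux problem means. At interior maxima the size of the drift is irrelevant, since $\nabla w=0$ there.

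\textbf{The maximum principle itself.} You prove it directly, using $w-\varepsilon t$ and the parabolic Hopf lemma; the paper cites Protter--Weinberger and Lieberman. The strict inequality Hopf needs, namely $w^\varepsilon<M$ on $\Omega\times[0,t_0]$, does follow from your interior case combined with the first-time choice of $t_0$. It would be worth stating this explicitly.
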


\newcommand{\unkeq}{\unknown_{\text{eq}}}

Next, we discuss long-time asymptotics of global-in-time solutions of \eqref{eq:Nonlinear-Fokker-Planck}. 
Since the equilibrium state $\unkeq$ of \eqref{eq:Nonlinear-Fokker-Planck} satisfies $\bu=\bm{0}$ hence
\begin{equation}
    d(x)\log\unkeq +\phi(x)=\Cr{const:1.equilibrium_const},
\end{equation}
where $\Cl{const:1.equilibrium_const}$ is a constant determined by the relation
\begin{equation}
    \int_\Omega \unkeq(x)\,dx =1.
\end{equation}
Thus, we can write the equilibrium state as
\begin{equation}
    \unkeq(x)
    =
    \exp\left(
    \frac{\Cr{const:1.equilibrium_const}-\phi(x)}{d(x)}
    \right).
\end{equation}
Note that if $d$ is a constant, then the equilibrium state can be expressed as
\begin{equation}
    \unkeq(x)
    =
    \Cl{const:1.Boltzmann_distribution}
    \exp\left(
    \frac{-\phi(x)}{d}
    \right),
\end{equation}
which is called the Boltzmann distribution. The next theorem shows that the probability density function associated with  \eqref{eq:Nonlinear-Fokker-Planck} converges to the equilibrium state from the perspective of the law of energy dissipation if $\nabla \log d(x)$ is sufficiently small.

\begin{theorem}
\label{thm:exp_decay}
    Let $n=1,2,3$. Let $\unknown_0$ be a strictly positive bounded given initial data.
    Then there are constants $\Cl{const:nabla_log_d<c}, \Cl{const:D[unknown]<C}, \Cl{const:D[unknown]_exp_decay}>0$ such that if two conditions 
    \begin{equation}
        \|\nabla \log d\|_\infty<\Cr{const:nabla_log_d<c},
        \quad \D[\unknown_0]\le \Cr{const:D[unknown]<C}
    \end{equation}
    hold, then for the associated classical solution $\unknown$ of \eqref{eq:Nonlinear-Fokker-Planck}, we have 
    \begin{equation}
        \label{eq:1.exp_dacay_Dissipation}
        \D[\unknown](t)
        \le 
        \Cr{const:D[unknown]_exp_decay}e^{-\lambda t}.
    \end{equation}
\end{theorem}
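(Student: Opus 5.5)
Following the Bakry--Émery / entropy-dissipation strategy, I will compute $\frac{d}{dt}\D[\unk](t)$ and show $\frac{d}{dt}\D[\unk]\le -2\lambda\D[\unk] + (\text{error terms})$. The error terms come from $\nabla d$ and are absorbed once $\|\nabla\log d\|_\infty$ and $\D[\unk]$ are small. Write $\bu=-\nabla w$ with $w:=d\log\unk+\phi$, so that $\unk_t=-\Div(\unk\bu)=\Div(\unk\nabla w)$ and $\D=\int_\Omega|\nabla w|^2\unk\,dx$. Differentiating,
\begin{equation}
\frac{d}{dt}\D=2\int_\Omega \nabla w\cdot\nabla w_t\,\unk\,dx+\int_\Omega|\nabla w|^2\unk_t\,dx,
\end{equation}
with $w_t=d\,\unk_t/\unk=d\,\Div(\unk\nabla w)/\unk$. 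Integrating by parts, using $\unk\nabla w\cdot\bnu=0$ on $\partial\Omega$, leads to
\begin{equation}
\frac{d}{dt}\D=-2\int_\Omega d\,\Div(\unk\nabla w)^2\unk^{-1}\,dx-2\int_\Omega \nabla d\cdot\nabla w\,\Div(\unk\nabla w)\,dx-\int_\Omega \nabla|\nabla w|^2\cdot\nabla w\,\unk\,dx .
\end{equation}

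Next I will rewrite the first term through a Bochner-type identity: expand $\Div(\unk\nabla w)$ and integrate by parts once more. With $d$ constant this produces $-2\int \big(d|\nabla^2 w|^2+\nabla^2\phi\,\nabla w\cdot\nabla w\big)\unk$ plus a boundary term $\int_{\partial\Omega}\unk\,\nabla|\nabla w|^2\cdot\bnu$. Since $\nabla w\cdot\bnu=0$ on $\partial\Omega$ and $\Omega$ is convex, the standard identity $\partial_{\bnu}|\nabla w|^2=-2\,\mathrm{II}(\nabla w,\nabla w)\le0$ gives this boundary term the good sign, so it can be dropped. In the inhomogeneous case, expressing $\nabla^2 w$ in terms of $\nabla^2\log\unk$ produces extra terms containing $\nabla d$ and $\nabla^2 d$. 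I will write these with $\nabla\log d$ as a factor, using $d\ge d_{\min}$ from \eqref{assume:d>c}.

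The heart of the proof, and the main obstacle, is controlling these error terms. They are of three types:
\begin{enumerate}[(i)]
\item terms like $\|\nabla\log d\|_\infty\int |\nabla^2 w||\nabla w|\unk$, which I absorb into the good term $\int d|\nabla^2w|^2\unk$ and into $\lambda\D$ by Young's inequality, provided $\|\nabla\log d\|_\infty<\Cr{const:nabla_log_d<c}$;
\item cubic terms like $\int|\nabla w|^3\unk$ or $\int|\nabla w|^2|\nabla^2 w|\unk$;
\item terms involving $\log\unk\,\nabla d$, coming from $\nabla w=d\nabla\log\unk+\log\unk\nabla d+\nabla\phi$, which I control using the a priori bounds of Theorem~\ref{thm:apriori_estimate}, since these make $\log\unk$ bounded.
\end{enumerate}
For the cubic terms I will use Theorem~\ref{thm:apriori_estimate} to compare weighted and unweighted norms, together with a Gagliardo--Nirenberg/Sobolev inequality on $\Omega$ (which is where $n\le3$ enters), to get
\begin{equation}
\int|\nabla w|^3\unk\lesssim \D^{1/2}\big(\textstyle\int|\nabla^2w|^2\unk+\D\big).
\end{equation}
This yields a differential inequality of the form
\begin{equation}
\frac{d}{dt}\D\le -(2\lambda-C\|\nabla\log d\|_\infty)\D-\big(c-C\D^{1/2}\big)\textstyle\int|\nabla^2w|^2\unk .
\end{equation}
Because the decay rate $e^{-\lambda t}$ is only half of $2\lambda$, there is room to absorb the $\|\nabla\log d\|_\infty$ loss.

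Finally, a continuity/bootstrap argument closes the proof. If $\D[\unk_0]\le\Cr{const:D[unknown]<C}$ is small enough that $C\D^{1/2}<c$, then the inequality shows $\D$ is nonincreasing, so smallness persists for all time. Then $\frac{d}{dt}\D\le-\lambda\D$, and Gronwall's inequality gives \eqref{eq:1.exp_dacay_Dissipation}. The delicate points are verifying that the boundary terms after each integration by parts either vanish by the no-flux condition or have a sign by convexity of $\Omega$, and obtaining the Sobolev estimate for $\nabla w$ up to the boundary. I expect the latter to need the Neumann-type condition $\nabla w\cdot\bnu=0$ in order to apply elliptic/$H^2$ control on the convex domain.
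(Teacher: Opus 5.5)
Your overall plan is the paper's. You work with $\mu=d\log\rho+\phi$, differentiate $\D$, and drop the boundary term $\int_{\partial\Omega}d\rho\,\nabla|\nabla\mu|^2\cdot\bnu\,d\sigma\le0$ by convexity. You use Young's inequality and the a priori bounds of Theorem~\ref{thm:apriori_estimate} for the $\nabla d$ terms, interpolate the cubic term using $n\le3$, and close with an ODE argument.

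Your endgame is a legitimate variant. The bound $\int_\Omega|\nabla\mu|^3\rho\,dx\le(\int_\Omega|\nabla\mu|^2\rho\,dx)^{1/2}(\int_\Omega|\nabla\mu|^4\rho\,dx)^{1/2}\lesssim\D^{1/2}\bigl(\int_\Omega|\nabla^2\mu|^2\rho\,dx+\D\bigr)$ follows from $H^1(\Omega)\subset L^4(\Omega)$ alone. Contrary to what you expect, no Neumann condition or elliptic $H^2$ theory is needed; the paper also uses only a Sobolev--Poincar\'e inequality plus a bound on the mean. Your monotonicity/bootstrap argument replaces the Gronwall-type Lemma~\ref{lem:2.Gronwall_Lemma}. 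The trade-off is that smallness of $\D$, rather than of $\|\nabla\log d\|_\infty$, absorbs the Hessian contribution of the cubic term.

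The gap is in the step you pass over. You say that expressing $\nabla^2\mu$ through $\nabla^2\log\rho$ produces terms with $\nabla^2 d$, and that you will write them with $\nabla\log d$ as a factor. That is impossible pointwise. A term such as $\int_\Omega\log\rho\,(\nabla\mu\cdot\nabla^2 d\,\nabla\mu)\rho\,dx$ is only bounded by $\|\log\rho\|_\infty\|\nabla^2 d\|_\infty\D$, and $\nabla^2 d$ need not be small when $\nabla\log d$ is: take $d=1+\varepsilon\sin(x_1/\sqrt{\varepsilon})$. None of your types (i)--(iii) covers such a term, so as written your argument needs an extra smallness hypothesis on $\nabla^2 d$.

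The missing idea is the paper's Lemma~\ref{nabla^2(d(x)f^(alpha-1))}. Never expand $\nabla^2(d\log\rho)$. Instead integrate $\int_\Omega(\nabla\mu\cdot\nabla^2(d\log\rho)\nabla\mu)\rho\,dx$ by parts once; the boundary term vanishes because $\rho\nabla\mu\cdot\bnu=0$, so only $\nabla(d\log\rho)=d\nabla\log\rho+\log\rho\nabla d$ survives. The $d\nabla\log\rho$ pieces are then reorganized by Lemmas~\ref{nablafnablamuDeltamu} and~\ref{nabla^2mu's-term} into $-2\int_\Omega d|\nabla^2\mu|^2\rho\,dx$ plus the signed boundary term. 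This is also where the shifts $\log\rho\mp1$ in $I_4,I_5$ come from.

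The $\nabla\log\rho$ left in $I_6$ is eliminated via $d\nabla\log\rho=\nabla\mu-d\log\rho\,\nabla\log d-\nabla\phi$. This gives a nonpositive term, a term bounded by $\|\nabla\phi\|_\infty\|\nabla\log d\|_\infty\|\log\rho\|_\infty\D$, and the single cubic term $\int_\Omega\log\rho\,(\nabla\log d\cdot\nabla\mu)|\nabla\mu|^2\rho\,dx$. Your interpolation handles that cubic term. It does not handle the $\int_\Omega|\nabla\mu|^2|\nabla^2\mu|\rho\,dx$ you also list under (ii), which fortunately never arises.

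Finally, the middle term $-2\int_\Omega\nabla d\cdot\nabla w\,\Div(\rho\nabla w)\,dx$ in your first identity is spurious. Integrating $2\int_\Omega\rho\nabla w\cdot\nabla w_t\,dx$ by parts directly gives $-2\int_\Omega d\rho^{-1}(\Div(\rho\nabla w))^2\,dx$, with the $\nabla d$ contributions cancelling. This slip is harmless for the estimates but wrong.
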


\begin{remark}
Note from $\nabla \log d(x) ={\nabla d(x)}/{d(x)}$ that the smallness of $\nabla \log d(x)$ implies either the smallness of $\nabla d(x)$ or the largeness of $d(x)$. In \cites{araki2025longtimebehaviorfreeenergy,arXiv:epshteyn2024longtimeasymptoticbehaviornonlinear, MR4506846}, it was pointed out that the largeness of $d(x)$, i.e., stronger diffusion, implies the exponential decay of the dissipation function. Furthermore, the smallness of $\nabla d(x)$ means that inhomogeneous diffusion is close to homogeneous diffusion. Thus, the assumption of smallness on $\nabla \log d(x)$ implies both the strength of diffusion and its closeness to homogeneous diffusion.
\end{remark}

\begin{remark}
We give a remark on the existence of global-in-time solutions of \eqref{eq:Nonlinear-Fokker-Planck}. In Theorem \ref{thm:exp_decay}, we assume existence of a global-in-time solution. The existence of local-in-time classical solutions for sufficiently smooth initial data was shown by \cite{MR4547562}, however it is not clear whether a global-in-time solution exists or not. On the other hand, we note that \cite{MR4976469} showed the existence of global-in-time solutions for the non-linear Fokker-Planck equation subjected to the ``periodic'' boundary condition.
\end{remark}

To prove Theorem \ref{thm:apriori_estimate}, we use the change of variable 
\begin{equation}
    \label{mu}
    \mu:=d(x)\log \unknown +\phi(x).
\end{equation}
Then, new variable $\mu$ satisfies the homogeneous Neumann boundary condition and
\begin{equation}
    \label{eq:1.equation_potential_function}
    \mu_t
    -
    d(x)\Delta\mu
    +
    (\log\unknown\nabla d(x)+\nabla\phi(x))
    \cdot
    \nabla \mu
    -
    |\nabla\mu|^2
    =
    0
\end{equation}
hence $\mu$ is a sub/super-solution of some linear parabolic equation. Using the maximum principle for the linear parabolic equation, we obtain \eqref{eq:apriori}.

Next, we briefly explain how to derive exponential decay \eqref{eq:1.exp_dacay_Dissipation}. We first compute the time derivative of the dissipation function $\D[\unk](t)$. Then, we obtain
\begin{equation}
    \label{eq:1.rate_dissipation}
    \frac{d}{dt}\D[\unk](t)
    \leq
    -2
    \lambda
    \int_\Omega |\nabla\mu|^2\unknown\, dx
    -
    2\int_\Omega d(x) |\nabla^2\mu|^2\unknown\, dx
    +
    \int_\Omega 
    G(\nabla d(x))
    \unk
    \,dx
\end{equation}
for some function $G$. The precise computation can be seen in \eqref{eq:exp_decay.rate_dissipation}. The function $G(\nabla d)$ can be estimated by 
\begin{equation}
    \label{eq:1.dissipation_inhomogeneous}
    G(\nabla d)
    \sim
    |\nabla d||\nabla\mu||\nabla^2 \mu|
    +
    |\nabla d||\nabla\mu|^3.
\end{equation}
Using 
\begin{equation}
    |\nabla d||\nabla\mu||\nabla^2 \mu|
    \leq
    \frac{1}{2}d(x)|\nabla^2 \mu|^2
    +
    \frac{1}{2}d(x)|\nabla \log d(x)|^2|\nabla\mu|,
\end{equation}
the first term of \eqref{eq:1.dissipation_inhomogeneous} can be controlled by the first and second term in the right-hand side of \eqref{eq:1.rate_dissipation} or smallness of $\|\nabla \log d\|_\infty$. 
To control the second term of \eqref{eq:1.dissipation_inhomogeneous}, we use the Sobolev inequality and interpolation. This is the reason why we assume the dimensional assumption $n=1,2,3$.

Finally, we numerically examine whether the smallness of $\|\nabla \log d\|_\infty$ is essential or not. To do this, we first consider \eqref{eq:1.equation_potential_function}, equation of $\mu$, to connect with the forward Euler scheme for nonlinear parabolic equations subjected to the Neumann boundary condition. Next, we calculate the decay rate of  the dissipation function $\D[\unk](t)$.

\subsection{Organization of the paper and notation}

In Section \ref{sec:Maximam_principle}, we prove Theorem \ref{thm:apriori_estimate}, which provides \textit{a priori} bounds for solutions to \eqref{eq:Nonlinear-Fokker-Planck}. These estimates are required to establish the asymptotic behavior of the dissipation function. In Section \ref{sec:proof_of_Main_theorem}, we present Theorem \ref{thm:exp_decay}, which establishes the exponential decay of the dissipation function. In Section \ref{sec:Numerical_Analysis}, we explain the numerical computation of \eqref{eq:Nonlinear-Fokker-Planck} and investigate the interaction between the smallness of $\nabla \log d$ and the decay rate of the dissipation function.

Let $\Omega\subset\R^n$ be an open set and let $f\colon\Omega\rightarrow\R$ be
a sufficiently smooth function $f:\Omega\rightarrow\R$. We denote the
gradient of $f$ as
\begin{equation}
 \nabla f
  :=
  \left(
   \frac{\partial f}{\partial x_1}, \frac{\partial f}{\partial x_2}, \dots, \frac{\partial f}{\partial x_n}
   \right).
\end{equation}
We denote the Hesse matrix of $f$ as
\begin{equation}
 \nabla^2f:=
  \begin{pmatrix}
         \frac{\partial^2 f}{\partial x_1^2 } &\cdots &\frac{\partial^2 f}{\partial x_1\partial x_i}& \cdots&\frac{\partial^2 f}{\partial x_1 \partial x_n}\\
         \vdots & \ddots& & &\vdots \\
         \frac{\partial^2 f}{\partial x_i\partial x_1}&\cdots&\frac{\partial^2 f}{\partial x_i^2}&\cdots&\frac{\partial^2 f}{\partial x_i \partial x_n}\\
         \vdots& & &\ddots&\vdots\\
         \frac{\partial^2 f}{\partial x_n\partial x_1}&\cdots
         &\frac{\partial^2 f}{\partial x_n\partial x_i}&\cdots&\frac{\partial^2 f}{\partial x_n^2}
  \end{pmatrix}.
\end{equation}
The Laplacian of $f$ is denoted as
\begin{equation}
 \Delta f
  :=
  \sum_{i=1}^n \frac{\partial^2 f}{\partial x_i^2}.
\end{equation}
For $n$-dimensional symmetric matrices $X, Y$,  we define $X\le Y$ to be
the case that for all $\xi \in \R^n$
\begin{equation*}
 X \xi \cdot \xi \le Y \xi \cdot\xi.
\end{equation*}
We denote $I$ the $n$-dimensional identity matrix. Thus,  for
$n$-dimensional symmetric matrix $X$,  $cI\leq X$ for some $c\in\R$ means
that the eigenvalue of $X$ is greater than or equal to $c$.

\section{proof of Theorem\ref{thm:apriori_estimate}}
\label{sec:Maximam_principle}
The proof of Theorem~\ref{thm:apriori_estimate} follows the maximum principle for the \eqref{eq:Nonlinear-Fokker-Planck}.
Namely, \eqref{eq:Nonlinear-Fokker-Planck} can be written in the non-divergence form as follows.
\begin{equation*}
     \begin{split}
        \unknown_t&-d(x)\Delta\unknown-(\nabla d(x)+\log\unknown\nabla d(x)+\nabla d(x)+\nabla \phi(x))\cdot\nabla\unknown
        \\
        &-\unknown(\log\unknown\Delta d(x)+\Delta\phi(x))=0.
     \end{split}
\end{equation*}
Since the above equation contains zeroth-order derivative terms of $\unknown$, the method to prove the maximum principle cannot be applicable directly. 
Thus, we consider change of variable from $\unknown$ to $\mu=d(x)\log \unknown +\phi(x)$. We first derive the equation of the potential $\mu$. 
\begin{lemma}
\label{lem:mu's_equation}
    Let $\unknown$ be a classical solution of \eqref{eq:Nonlinear-Fokker-Planck} and let $\mu$ be defined by \eqref{mu}.
    Then, we obtain that 
    \begin{equation}
    \label{eq:mu's_equation}
         \mu_t-d(x)\Delta\mu+(\log\unknown\nabla d(x)+\nabla\phi(x))\cdot\nabla \mu=|\nabla\mu|^2.
    \end{equation}
\end{lemma}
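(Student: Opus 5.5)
The plan is a direct computation: rewrite the equation in terms of $\mu$ using $\unknown = \exp\bigl((\mu-\phi)/d\bigr)$, which is legitimate because a classical solution is positive (so $\log\unknown$ is defined). The first equation of \eqref{eq:Nonlinear-Fokker-Planck} reads $\unknown_t = \Div(\unknown\nabla\mu)$. Since $d$ and $\phi$ do not depend on $t$, we have $\mu_t = d(x)\,\unknown_t/\unknown$. Hence it suffices to expand $\Div(\unknown\nabla\mu) = \unknown\Delta\mu + \nabla\unknown\cdot\nabla\mu$ and express $\nabla\unknown/\unknown = \nabla\log\unknown$ through $\mu$.

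The key identity comes from differentiating the definition \eqref{mu}. This gives $\nabla\mu = \log\unknown\,\nabla d + d\,\nabla\log\unknown + \nabla\phi$, so
\begin{equation*}
 d(x)\nabla\log\unknown = \nabla\mu - \log\unknown\,\nabla d(x) - \nabla\phi(x).
\end{equation*}
Dividing $\unknown_t = \unknown\Delta\mu + \nabla\unknown\cdot\nabla\mu$ by $\unknown$ and multiplying by $d(x)$ yields
\begin{equation*}
 \mu_t = d(x)\Delta\mu + d(x)\nabla\log\unknown\cdot\nabla\mu = d(x)\Delta\mu + |\nabla\mu|^2 - (\log\unknown\,\nabla d(x) + \nabla\phi(x))\cdot\nabla\mu .
\end{equation*}
Rearranging gives exactly \eqref{eq:mu's_equation}.

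No step is a real obstacle. The only points needing care are the justification of dividing by $\unknown$, which relies on positivity of the classical solution (implicit in the use of $\log\unknown$), and keeping the $\log\unknown\,\nabla d$ term instead of trying to eliminate it. For completeness one also notes that the boundary condition becomes $\unknown\nabla\mu\cdot\bnu = 0$, hence $\nabla\mu\cdot\bnu = 0$ on $\partial\Omega$ by positivity. This is not part of the statement itself, but it is what is used afterwards in the maximum principle argument.
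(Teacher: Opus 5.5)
Your proposal is correct and follows the paper's own proof: write $\mu_t = d(x)\unknown^{-1}\Div(\unknown\nabla\mu) = d(x)\Delta\mu + d(x)\nabla\log\unknown\cdot\nabla\mu$, then substitute $d(x)\nabla\log\unknown = \nabla\mu - \log\unknown\,\nabla d(x) - \nabla\phi(x)$ and rearrange. Your added remark that positivity of $\unknown$ justifies dividing by $\unknown$ and gives $\nabla\mu\cdot\bnu=0$ on $\partial\Omega$ is a fair clarification that the paper leaves implicit.
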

\begin{proof}
We directly calculate
\begin{equation*}
    \begin{split}
        \mu_t&=d(x)\frac{1}{\unknown}\unknown_t
        \\
        &=d(x)\frac{1}{\unknown}\Div(\unknown\nabla\mu)
        \\
        &=d(x)\Delta\mu+d(x)\nabla\log \unknown\cdot \nabla\mu.
    \end{split}
\end{equation*}
Since $\nabla\mu=d(x)\nabla\log \unknown+\log \unknown \nabla d(x)+\nabla\phi(x)$, we obtain \eqref{eq:mu's_equation}.

\end{proof}
For the classical solution $\unknown$ of \eqref{eq:Nonlinear-Fokker-Planck}, define a linear uniformly parabolic operator 
\begin{equation}
\label{def:linear-op}
    L[\mu]:=-d(x)\Delta\mu +(\log \unknown \nabla d(x)+\nabla \phi(x))\cdot \nabla\mu.
\end{equation}
Then, \eqref{eq:mu's_equation} is transformed into \begin{equation}
\label{eq:potential_equation}
\left\{
\begin{aligned}
    \mu_t+L[\mu]&=|\nabla\mu|^2, \quad &&x \in \Omega , t>0,\\
    \nabla\mu\cdot\bnu&=0,\quad &&x\in \partial \Omega, t>0.
\end{aligned}
\right.
\end{equation}
Since $L[\mu]$ is a linear uniformly parabolic operator, the maximum principle is applicable by \cites{MR219861,MR1625845}.
\begin{lemma}[\cites{MR219861,MR1625845}]
\label{lem:maximum_principle_general}
    Let $L$ be a uniformly parabolic operator defined in \eqref{def:linear-op}.
    \begin{enumerate}
        \item Let $w$ satisfy the uniformly parabolic differential inequality
        \begin{equation}
        \left\{
        \begin{aligned}
            w_t+L[w]&\ge0,\quad in &&\Omega \times (0,T), \\
            \nabla w\cdot\bnu&=0,\quad on &&\partial \Omega\times (0,T).
            \end{aligned}
            \right.
        \end{equation}
        Then we obtain 
        \begin{equation}
            \min_{y \in \overline{\Omega}}w(y,0)\le w(x,t), \quad (x,t)\in \overline{\Omega}\times(0,T).
        \end{equation}
        \item Let $w$ satisfy the uniformly parabolic differential inequality
        \begin{equation}
        \left\{
        \begin{aligned}
            w_t+L[w]&\le0,\quad in &&\Omega \times (0,T), \\
            \nabla w\cdot\bnu&=0,\quad on &&\partial \Omega\times (0,T).
            \end{aligned}
            \right.
        \end{equation}
        Then we obtain 
        \begin{equation}
             w(x,t) \le \max_{y \in \overline{\Omega}}w(y,0), \quad (x,t)\in \overline{\Omega}\times(0,T).
        \end{equation}
    \end{enumerate}

\end{lemma}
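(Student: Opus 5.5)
The plan is to prove the weak maximum principle for the Neumann problem with the operator $L$ of \eqref{def:linear-op}, treating the coefficient $b(x,t):=\log\unknown\,\nabla d(x)+\nabla\phi(x)$ as a fixed, given drift. Since $\unknown$ is a positive classical solution, $b$ is continuous and bounded on $\overline{\Omega}\times[0,T']$ for every $T'<T$. The principal part $-d(x)\Delta$ is uniformly elliptic by \eqref{assume:d>c}. It suffices to prove (2), because (1) follows by applying (2) to $-w$.

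For (2) I would perturb: for $\varepsilon>0$ set $w_\varepsilon := w-\varepsilon t$. Then $\partial_t w_\varepsilon + L[w_\varepsilon] \le -\varepsilon<0$ in $\Omega\times(0,T)$, and $w_\varepsilon$ still satisfies $\nabla w_\varepsilon\cdot\bnu=0$ on $\partial\Omega$. Fix $T'<T$ and let $(x_0,t_0)$ be a maximum point of $w_\varepsilon$ on $\overline{\Omega}\times[0,T']$, and suppose $t_0>0$.

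If $x_0\in\Omega$, then $\nabla w_\varepsilon=0$, $\Delta w_\varepsilon\le 0$ and $\partial_t w_\varepsilon\ge 0$ at that point (one-sided if $t_0=T'$). Hence $\partial_t w_\varepsilon + L[w_\varepsilon]\ge 0$ there, which is a contradiction.

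If $x_0\in\partial\Omega$, the Neumann condition alone gives no contradiction, and this is the main obstacle. I would use the Hopf boundary lemma for parabolic operators (as in \cites{MR219861,MR1625845}), which needs the interior sphere condition guaranteed by the smooth boundary. First, if $w_\varepsilon$ were constant on a parabolic neighbourhood, the differential inequality would fail there, since a constant gives $0\le-\varepsilon$. Otherwise $w_\varepsilon<w_\varepsilon(x_0,t_0)$ in a small interior ball touching $\partial\Omega$ at $x_0$, over a short time interval before $t_0$. Hopf's lemma then gives $\nabla w_\varepsilon(x_0,t_0)\cdot\bnu>0$, contradicting the Neumann condition. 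An alternative that avoids the strong form of Hopf is a barrier: perturb by $\delta\psi(x)$, where $\psi$ is smooth with $\nabla\psi\cdot\bnu<0$ on $\partial\Omega$ (for instance a suitable function of the distance to the boundary). Choosing $\delta$ small relative to $\varepsilon$ keeps the strict inequality $\partial_t+L<0$, using the boundedness of $b$ and $d$ on $[0,T']$. Then at a boundary maximum one has $\nabla(w_\varepsilon+\delta\psi)\cdot\bnu<0$, which is impossible at a boundary maximum point. I would use this barrier version, since it is elementary.

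Therefore the maximum is attained at $t_0=0$, so $w-\varepsilon t-\delta\psi\le \max_{\overline\Omega}(w(\cdot,0)-\delta\psi)$. Letting $\delta\to0$, then $\varepsilon\to0$, then $T'\to T$ yields (2), and (1) follows by symmetry.
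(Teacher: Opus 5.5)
Your argument is correct, but it is not what the paper does. The paper gives no proof of this lemma and simply invokes the Neumann maximum principle for linear uniformly parabolic operators from \cites{MR219861,MR1625845}. In those references boundary maxima are excluded via a parabolic boundary-point (Hopf) lemma, and the strong maximum principle comes with it.

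You instead prove the weak maximum principle directly, in two steps. The shift $w-\varepsilon t$ makes the differential inequality strict, which rules out interior maxima at positive times. The barrier $\delta\psi$ with $\nabla\psi\cdot\bnu<0$ on $\partial\Omega$ turns the homogeneous Neumann condition into a strict sign that no boundary maximum can have, so no Hopf lemma is needed.

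What your route buys is transparency about exactly what is used:
\begin{itemize}
\item continuity and positivity of $\unknown$ on $\overline{\Omega}\times[0,T']$, so that the drift $\log\unknown\,\nabla d+\nabla\phi$ and hence $L[\psi]$ are bounded there;
\item $C^1$ regularity of $w$ up to $\partial\Omega$;
\item the existence of $\psi$. For the convex $\Omega$ of this paper, $\psi(x)=-|x-x_c|^2$ with $x_c\in\Omega$ works, since $(x-x_c)\cdot\bnu(x)>0$ on $\partial\Omega$.
\end{itemize}
It also shows there is no circularity when the lemma is used to prove Theorem~\ref{thm:apriori_estimate}. The bound on the drift, and hence the admissible $\delta<\varepsilon/\sup|L[\psi]|$, may depend on $T'$. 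That dependence disappears when $\delta\to0$, and the conclusion is uniform in time. The cited route gives more (the strong maximum principle), but that is not needed here.

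One small slip: with $v=w-\varepsilon t+\delta\psi$, your final inequality should read $w-\varepsilon t+\delta\psi\le\max_{\overline{\Omega}}(w(\cdot,0)+\delta\psi)$, not with $-\delta\psi$. The sign is irrelevant after letting $\delta\to0$, since $\psi$ is bounded.
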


Next, we show the lower bounds of $\unknown$ by applying Lemma~\ref{lem:maximum_principle_general}.

\begin{lemma}
    Let $\unknown$ be a classical solution of \eqref{eq:Nonlinear-Fokker-Planck}. Then, we obtain 
    \begin{equation}
        \label{eq:2.LowerBounds_unknown}
        \Cr{const:f>c} \le \unknown,
    \end{equation}
    where $\Cr{const:f>c}$ is a positive constant depending only on $\unknown_0$, $d(x)$, and $\phi$.
\end{lemma}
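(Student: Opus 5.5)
Since $\mu$ solves \eqref{eq:potential_equation} and the right-hand side $|\nabla\mu|^2$ is nonnegative, we have $\mu_t+L[\mu]=|\nabla\mu|^2\ge 0$ in $\Omega\times(0,T)$, together with the homogeneous Neumann condition $\nabla\mu\cdot\bnu=0$ on $\partial\Omega\times(0,T)$. We regard $\log\unknown$ in the coefficient of $L$ as a fixed, given function of $(x,t)$ (it is continuous because $\unknown$ is a positive classical solution). With that view $L$ is linear and uniformly parabolic, since $d\ge d_{\min}>0$ by \eqref{assume:d>c}. Part (1) of Lemma~\ref{lem:maximum_principle_general} then applies with $w=\mu$ and gives, for all $(x,t)\in\overline{\Omega}\times(0,T)$,
\begin{equation}
    \mu(x,t)\ge \min_{y\in\overline{\Omega}}\mu(y,0)
    =\min_{y\in\overline{\Omega}}\bigl(d(y)\log\unknown_0(y)+\phi(y)\bigr)=:m_0 .
\end{equation}
The constant $m_0$ is finite because $\unknown_0$ is positive and continuous on the compact set $\overline{\Omega}$ and $d,\phi\in C^2(\overline{\Omega})$. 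Since $T$ is arbitrary, the bound holds for all $t>0$.

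Next we convert this into a bound on $\unknown$. From $d(x)\log\unknown+\phi(x)\ge m_0$ we get $\log\unknown\ge (m_0-\phi(x))/d(x)$. To make the right-hand side uniform in $x$, we bound it below by $-(|m_0|+\max_{\overline\Omega}|\phi|)/d_{\min}$. This yields
\begin{equation}
    \unknown(x,t)\ge \exp\Bigl(-\frac{|m_0|+\|\phi\|_{L^\infty(\Omega)}}{d_{\min}}\Bigr)=:\Cr{const:f>c}>0 .
\end{equation}
The constant depends only on $\unknown_0$, $d$ and $\phi$, which proves \eqref{eq:2.LowerBounds_unknown}.

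The main point needing care is the legitimacy of using Lemma~\ref{lem:maximum_principle_general} when the drift coefficient $\log\unknown\,\nabla d$ depends on the solution itself. I would handle this by the standard freezing argument: on each $\overline{\Omega}\times[0,T']$ with $T'<T$, the classical solution $\unknown$ is continuous and positive, so the drift coefficient is bounded there. The maximum principle cited from \cites{MR219861,MR1625845} only needs such bounded coefficients and never uses a bound on $\unknown$ that is uniform in time. Convexity of $\Omega$ and the Neumann condition enter only through that cited lemma, so no additional boundary analysis is needed.
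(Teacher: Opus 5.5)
Your proposal is correct and follows the paper's proof: since $|\nabla\mu|^2\ge 0$, part (1) of Lemma~\ref{lem:maximum_principle_general} gives $\mu\ge m_0$, and dividing by $d$ yields the lower bound on $\rho$. Your last step, bounding $(m_0-\phi(x))/d(x)$ below by $-(|m_0|+\|\phi\|_\infty)/d_{\min}$, makes the uniformity in $x$ explicit and avoids the sign case split that the paper defers to Remark~\ref{remark:c_1<unknown<c_2}.
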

\begin{proof}
From \eqref{eq:potential_equation}, we have for $x \in \Omega$ and $t>0$, 
\begin{equation}
    \mu_t+L[\mu]=|\nabla\mu|^2\ge0.
\end{equation}
Thus, from Lemma~\ref{lem:maximum_principle_general}, we obtain 
\begin{equation}
\begin{split}
    \min_{y \in \overline{\Omega}}\mu(y,0) 
    &\le \mu(x,t)
    \\
    &=d(x)\log\unknown(x,t)+\phi(x)
    \\
    & \le d(x)\log\unknown(x,t)+\max_{y\in\overline{\Omega}}\phi(y)
\end{split}
\end{equation}
for $x\in \Omega$ and $t>0$. Then, 
\begin{equation}
    \frac{\min_{y \in \overline{\Omega}}\mu(y,0)-\max_{y\in \overline{\Omega}}\phi(y)}{d(x)}\le \log\unknown(x,t)
\end{equation}
hence we have \eqref{eq:2.LowerBounds_unknown}.
\end{proof}

\begin{remark}
\label{remark:c_1<unknown<c_2}
    If $\min_{y \in \overline{\Omega}}\mu(y,0)\ge \max_{y \in \overline{\Omega}}\phi(y)$, then constant $\Cr{const:f>c}$ can be explicitly written as 
    \begin{equation}
        \Cr{const:f>c}=\exp\left(\frac{\min_{y \in \overline{\Omega}}(d(y)\log \unknown_0(y)+\phi(y))-\max_{y \in \overline{\Omega}}\phi(y)}{\max_{y \in \overline{\Omega}}d(y)}\right).
    \end{equation}
    Conversely, if $\min_{y \in \overline{\Omega}}\mu(y,0)\le \max_{y \in \overline{\Omega}}\phi(y)$, then constant $\Cr{const:f>c}$ can be written as 
    \begin{equation}
        \Cr{const:f>c}=\exp\left(\frac{\max_{y \in \overline{\Omega}}(d(y)\log \unknown_0(y)+\phi(y))-\min_{y \in \overline{\Omega}}\phi(y)}{\min_{y \in \overline{\Omega}}d(y)}\right).
    \end{equation}
    In either case, positive constant $\Cr{const:f>c}$ can be explicitly given by 
    $\min_{y \in \overline{\Omega}}\mu(y,0)$, $\max_{y \in \overline{\Omega}}\phi(y)$, $\max_{y \in \overline{\Omega}}d(y)$ and $\min_{y \in \overline{\Omega}}d(y)$.
\end{remark}

Next, we derive the upper bounds of $\unknown$. To do this, we set $\eta=e^{\beta\mu}$ for $\beta>0$. Then we compute
\begin{equation*}
        \eta_t=\beta e^{\beta \mu}\mu_t, \quad
        \nabla\eta=\beta e^{\beta \mu}\nabla\mu, \quad
        \Delta\eta= \beta^2 e^{\beta \mu}|\nabla\mu|^2+\beta e^{\beta\mu}\Delta \mu.
\end{equation*}
Plugging \eqref{eq:mu's_equation} into $\eta_t$, we obtain
\begin{equation}
    \begin{split}
        \eta_t&=\beta e^{\beta\mu}(d(x)\Delta \mu-(\log \unknown\nabla d(x) +\nabla\phi(x))\cdot \nabla\mu+|\nabla\mu|^2)
        \\
        &=d(x)(\Delta\eta-\beta^2 e^{\beta\mu}|\nabla\mu|^2)-(\log\unknown\nabla d(x)+\nabla\phi(x))\cdot\nabla\eta+\beta e^{\beta\mu}|\nabla\mu|^2
        \\
        &=d(x)\Delta\eta-(\log\unknown\nabla d(x)+\nabla\phi(x))\cdot\nabla\eta+\beta e^{\beta\mu}(1-d(x)\beta)|\nabla\mu|^2.
    \end{split}
\end{equation}
We take $\beta$ sufficiently large that $(1-d(x)\beta)<0$ for $x\in\Omega$. Then we have the following:
\begin{equation}
\label{eq:subsolution_mu}
    \eta_t-d(x)\Delta\eta+(\log\unknown\nabla d(x)+\nabla\phi(x))\cdot\nabla\eta\le 0.
\end{equation}
Since $\eta$ satisfies the differential inequality $\eta_t+L[\eta] \le 0$, we can apply the strong maximum principle to $\eta$. 
\begin{lemma}
    Let $\unknown$ be a classical solution of \eqref{eq:Nonlinear-Fokker-Planck}. Then, we obtain 
    \begin{equation}
    \label{eq:unknow<=C}
        \unknown\le \Cr{const:f<c},
    \end{equation}
    where $\Cr{const:f<c}$ is a positive constant depending only on $\unknown_0$, $d$ and $\phi$.
\end{lemma}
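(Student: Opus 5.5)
We apply the second part of Lemma~\ref{lem:maximum_principle_general} to $\eta=e^{\beta\mu}$, with $\beta>1/d_{\min}$ fixed, for instance $\beta=2/\Cr{const:d>c}$. This choice gives $1-d(x)\beta<0$ on $\overline{\Omega}$, so \eqref{eq:subsolution_mu} holds, i.e.\ $\eta_t+L[\eta]\le 0$ in $\Omega\times(0,T)$ for every $T>0$.

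Next we check the boundary condition required by the lemma. On $\partial\Omega$ we have $\nabla\eta\cdot\bnu=\beta e^{\beta\mu}\nabla\mu\cdot\bnu$. The boundary condition in \eqref{eq:Nonlinear-Fokker-Planck} reads $\unknown\nabla\mu\cdot\bnu=0$. Since $\unknown>0$ by the lower bound \eqref{eq:2.LowerBounds_unknown}, it follows that $\nabla\mu\cdot\bnu=0$, as already recorded in \eqref{eq:potential_equation}, and hence $\nabla\eta\cdot\bnu=0$. The coefficients of $L$ involve $\log\unknown\,\nabla d$, which is continuous on $\overline{\Omega}\times[0,T]$ for a classical solution. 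They are bounded from below on the logarithm thanks to the lower bound just proved, so $L$ is indeed uniformly parabolic with the ellipticity $d\ge d_{\min}$. The second part of the lemma then gives
\begin{equation}
e^{\beta\mu(x,t)}\le \max_{y\in\overline{\Omega}}e^{\beta\mu(y,0)},
\end{equation}
and by monotonicity of the exponential, $\mu(x,t)\le \max_{y\in\overline{\Omega}}\bigl(d(y)\log\unknown_0(y)+\phi(y)\bigr)=:M_0$. This quantity is finite because $\unknown_0$ is positive and continuous on the compact set $\overline{\Omega}$.

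Finally, we unwind the definition of $\mu$. We have $d(x)\log\unknown(x,t)\le M_0-\phi(x)\le M_0-\min_{\overline{\Omega}}\phi$, so $\log\unknown\le (M_0-\min\phi)/d(x)$. If the numerator is nonnegative, we divide by $d_{\min}$; otherwise we divide by $d_{\max}$, as in Remark~\ref{remark:c_1<unknown<c_2}. In either case we obtain
\begin{equation}
\unknown\le \exp\left(\frac{M_0-\min_{\overline{\Omega}}\phi}{d_{\min}}\right)\quad\text{or}\quad \unknown\le\exp\left(\frac{M_0-\min_{\overline{\Omega}}\phi}{d_{\max}}\right),
\end{equation}
which defines $\Cr{const:f<c}$ in terms of $\unknown_0$, $d$ and $\phi$ only.

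The only delicate point is justifying the hypotheses of the maximum principle: the Neumann condition for $\eta$ and the regularity and uniform parabolicity of $L$, whose drift depends on the solution itself. Both rest on the positivity of $\unknown$ from the preceding lemma, which is why the lower bound is proved first. The sign condition, which is the actual mechanism of the proof, is handled by the choice of $\beta$. The transformation to $\eta$ is needed at all because for $\mu$ itself the term $|\nabla\mu|^2$ has the wrong sign for an upper bound.
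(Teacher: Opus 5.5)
Your proof is correct and follows the paper's argument. You apply the second part of Lemma~\ref{lem:maximum_principle_general} to $\eta=e^{\beta\mu}$ with $\beta$ large enough that $1-d(x)\beta<0$, use $\nabla\eta\cdot\bnu=\beta e^{\beta\mu}\nabla\mu\cdot\bnu=0$ to get $\mu(x,t)\le\max_{\overline{\Omega}}\mu(\cdot,0)$, and unwind $\mu=d\log\unknown+\phi$. Your explicit choice $\beta=2/d_{\min}$ and the sign-dependent division by $d_{\min}$ or $d_{\max}$ only make explicit what the paper leaves implicit.
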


\begin{proof}
Let $\eta=e^{\beta\mu}$. Then $\eta$ satisfies
\begin{equation*}
    \eta_t+L[\eta] \le 0,\quad \text{in} \, \Omega\times(0,T),
\end{equation*}
and 
\begin{equation*}
    \nabla\eta\cdot\bnu=\beta e^{\beta\mu}\nabla\mu\cdot\bnu=0, \quad \text{on}\, \partial \Omega\times(0,T).
\end{equation*}
Hence we can apply Lemma \ref{lem:maximum_principle_general} and 
\begin{equation*}
    \eta(x,t)\le \max_{y \in \overline{\Omega}}\eta(y,0)
\end{equation*}
for $x \in \Omega$ and $t>0$. Thus, we have 
\begin{equation*}
    \mu(x,t)\le \max_{y\in \overline{\Omega}}\mu(y,0),
\end{equation*}
that is 
\begin{equation*}
    d(x)\log \unknown(x,t) \le \max_{y \in \Omega }\mu(y,0)-\min_{y \in \overline{\Omega}}\phi(y).
\end{equation*}
Therefore, \eqref{eq:unknow<=C} is derived.
\end{proof}
\begin{remark}
    As in a similar consideration in Remark \ref{remark:c_1<unknown<c_2}, a positive constant $\Cr{const:f<c}$ can be explicitly written by $\max_{y \in \overline{\Omega}}\mu(y,0)$, $\min_{y \in \overline{\Omega}}\phi(y)$, $\max_{y \in \overline{\Omega}}d(y)$, and $\min_{y \in \overline{\Omega}}d(y)$.
\end{remark}

\section{Proof of theorem \ref{thm:exp_decay}}
\label{sec:proof_of_Main_theorem}
The exponential decay of the dissipation function $\D[\unk](t)$ will be shown by evaluating the time derivative of $\D[\unk](t)$.
By direct computation, we have the following:
\begin{equation}
    \label{d^2F/dt^2_0}
    \begin{split}
        \frac{d}{dt}\D[\unknown](t)
        &=
        \frac{d}{dt}\left(\int_\Omega |\nabla\mu|^2\unknown\, dx\right)
        \\
        &=
        \int_\Omega |\nabla\mu|^2\unknown_t\, dx+2\int_\Omega (\nabla\mu\cdot\nabla\mu_t)\unknown\, dx.
    \end{split}
\end{equation}

We first compute the right-hand side of \eqref{d^2F/dt^2_0}.

\begin{lem}
    \label{lem:second-derivative-F}
    Let $\unknown$ be a classical solution of \eqref{eq:Nonlinear-Fokker-Planck} on $\overline{\Omega}\times [0, \infty)$. Then, 
    \begin{equation}
    \label{eq:second-derivative-F}
        \begin{split}
            \frac{d}{dt}\D[\unknown](t)
            &=
            -2\int_\Omega (\nabla\mu\cdot \nabla^2(d(x)\log\unknown)\nabla\mu)\unknown\, dx
            \\
            &\quad
            -2\int_\Omega (\nabla\mu\cdot\nabla^2\phi(x)\nabla\mu)\unknown\, dx
            \\
            &\quad
            -2\int_\Omega d(x)(\nabla\log \unknown\cdot\nabla\mu)^2\unknown\,dx 
            \\
            &\quad
            -4\int_\Omega d(x)(\nabla\log \unknown\cdot\nabla\mu)\Delta\mu \unknown\,dx
            \\
            &\quad
            -2\int_\Omega d(x)(\Delta\mu)^2\unknown\,dx.
        \end{split}            
    \end{equation}
\end{lem}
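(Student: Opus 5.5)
The plan is to handle the two integrals on the right-hand side of \eqref{d^2F/dt^2_0} separately. In each, I integrate by parts once, using that the flux $\unknown\nabla\mu$ has vanishing normal component on $\partial\Omega$. By \eqref{eq:potential_equation} this is equivalent to $\nabla\mu\cdot\bnu=0$, since $\unknown>0$ by Theorem~\ref{thm:apriori_estimate}. No Bochner-type identity and no convexity of $\Omega$ should be needed for this lemma; convexity will matter only later, when $\Delta\mu$ terms are traded for $|\nabla^2\mu|^2$.

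\emph{First integral.} Since $\unknown_t=\Div(\unknown\nabla\mu)$, the divergence theorem gives
\begin{equation*}
\int_\Omega|\nabla\mu|^2\unknown_t\,dx
=\int_{\partial\Omega}|\nabla\mu|^2\unknown\nabla\mu\cdot\bnu\,dS-\int_\Omega\nabla|\nabla\mu|^2\cdot\nabla\mu\,\unknown\,dx
=-2\int_\Omega(\nabla\mu\cdot\nabla^2\mu\,\nabla\mu)\unknown\,dx .
\end{equation*}
Splitting $\nabla^2\mu=\nabla^2(d(x)\log\unknown)+\nabla^2\phi(x)$ then yields exactly the first two terms of \eqref{eq:second-derivative-F}.

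\emph{Second integral.} I use the intermediate identity from the proof of Lemma~\ref{lem:mu's_equation}:
\begin{equation*}
\mu_t=\frac{d(x)}{\unknown}\Div(\unknown\nabla\mu)=d(x)\bigl(\Delta\mu+\nabla\log\unknown\cdot\nabla\mu\bigr).
\end{equation*}
Integrating by parts, with the boundary term $\int_{\partial\Omega}\mu_t\,\unknown\nabla\mu\cdot\bnu\,dS$ vanishing again, gives
\begin{equation*}
2\int_\Omega(\nabla\mu\cdot\nabla\mu_t)\unknown\,dx=-2\int_\Omega\Div(\unknown\nabla\mu)\,\mu_t\,dx=-2\int_\Omega d(x)\bigl(\Delta\mu+\nabla\log\unknown\cdot\nabla\mu\bigr)^2\unknown\,dx,
\end{equation*}
because $\Div(\unknown\nabla\mu)=\unknown(\Delta\mu+\nabla\log\unknown\cdot\nabla\mu)$. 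Expanding the square produces the remaining three terms, with coefficients $-2$, $-4$ and $-2$.

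\emph{Main obstacle.} The only delicate point is justifying the manipulations rather than the algebra. One must differentiate under the integral in \eqref{d^2F/dt^2_0}, and $\nabla\mu_t$ must exist and be continuous up to the boundary so that the divergence theorem applies. I will take this as part of the meaning of classical solution on $\overline{\Omega}\times[0,\infty)$, i.e.\ $\unknown$ is $C^{2,1}$ with $\nabla\unknown_t$ continuous. Positivity from Theorem~\ref{thm:apriori_estimate} makes $\log\unknown$, and hence $\mu$, equally smooth, and this is what makes the boundary terms well defined.
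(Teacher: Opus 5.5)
Your proof is correct and is essentially the paper's argument. The paper also integrates $|\nabla\mu|^2\Div(\unknown\nabla\mu)$ by parts and splits $\nabla^2\mu=\nabla^2(d(x)\log\unknown)+\nabla^2\phi(x)$, and it integrates the $\nabla\mu_t$ term by parts to get $-2\int_\Omega d(x)\unknown^{-1}(\Div(\unknown\nabla\mu))^2\,dx$, which is exactly your perfect square $-2\int_\Omega d(x)(\Delta\mu+\nabla\log\unknown\cdot\nabla\mu)^2\unknown\,dx$; note that the boundary terms vanish directly from the no-flux condition $\unknown\nabla\mu\cdot\bnu=0$, so positivity is needed only to define $\mu$, not to kill them.
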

\begin{proof}
    Using the integration by parts and \eqref{eq:Nonlinear-Fokker-Planck}, the first term on the right hand side of \eqref{d^2F/dt^2_0} turns into
\begin{equation*}
   \int_\Omega |\nabla\mu|^2\unknown_t\, dx=\int_\Omega |\nabla\mu|^2\Div(\unknown\nabla\mu)\, dx
            =-\int_\Omega (\nabla(|\nabla\mu|^2) \cdot\nabla\mu)\unknown\, dx.
\end{equation*}
Next,  we compute $\nabla(|\nabla \mu|^2)\cdot\nabla\mu$. We denote $\nabla\mu=(\mu_{x_1}, \mu_{x_2}, \dots \mu_{x_n})$. Then,  by direct calculation, we obtain
    \begin{equation*}
        \begin{split}(\nabla(|\nabla\mu|^2)\cdot\nabla\mu)&=\sum_{i=1}^n\left(\sum_{j=1}^n\mu_{x_j}^2\right)_{x_i}\mu_{x_i} \\
            &=\sum_{i, j=1}^n 2\mu_{x_j}\mu_{x_jx_i}\mu_{x_i}\\
            &=2(\nabla\mu\cdot\nabla^2\mu\nabla\mu).
        \end{split}
    \end{equation*}
    Since $\mu= d(x) \log\unknown+\phi(x)$, the following holds:
    \begin{equation}
    \label{eq:|nablamu|^2unknown_t}
        \begin{split}
            \int_\Omega |\nabla\mu|^2\unknown_t\, dx&=-2\int_\Omega (\nabla\mu\cdot \nabla^2(d(x)\log\unknown)\nabla\mu)\unknown\, dx
            \\
            &\quad-2\int_\Omega (\nabla\mu\cdot\nabla^2\phi(x)\nabla\mu)\unknown\,dx.
        \end{split}
    \end{equation}
    
    We next compute the second term on the right hand side of \eqref{d^2F/dt^2_0}.
    Using \eqref{eq:Nonlinear-Fokker-Planck} and the definition of $\mu$, 
    \begin{equation}
    \label{eq:nablamu_t_1}
    \nabla\mu_t=\nabla(d(x)\unknown^{-1}\unknown_t)
    =\nabla(d(x)\unknown^{-1}\Div(\unknown\nabla\mu)).
    \end{equation}
    Using integration by parts together with the boundary condition of \eqref{eq:Nonlinear-Fokker-Planck},  we have
    \begin{equation*}
        \begin{split}
            \int_\Omega (\nabla\mu\cdot\nabla\mu_t)\unknown\, dx
             &=
             \int_\Omega (\nabla\mu\cdot\nabla(d(x)\unknown^{-1}\Div(\unknown\nabla\mu)))\unknown\, dx
             \\
            &=-\int_\Omega d(x)\unknown^{-1}(\Div(\unknown\nabla\mu))^2\, dx
            \\
            &=-\int_\Omega d(x)(\nabla \unknown\cdot\nabla\mu)^2\unknown^{-1}\, dx
            \\
            &\quad
            -2\int_\Omega d(x)(\nabla \unknown\cdot\nabla\mu)\Delta\mu \, dx
            \\
            &\quad
            -\int_\Omega d(x)(\Delta\mu)^2\unknown\, dx.
        \end{split}
    \end{equation*}
    Since $\nabla\log \unknown =\unknown^{-1}\nabla \unknown$, we obtain
    \begin{equation}
    \label{eq:nablamu_t_2}
        \begin{split}
            2\int_\Omega(\nabla\mu\cdot\nabla\mu_t)\unknown\,dx
            &=-2\int_\Omega d(x)(\nabla\log\unknown\cdot\nabla\mu)^2\unknown\,dx 
            \\
            &\quad-4\int_\Omega d(x)(\nabla\log\unknown\cdot\nabla\mu)\Delta\mu\unknown\,dx
            \\
            &\quad-2\int_\Omega d(x)(\Delta\mu)^2\unknown\,dx.
        \end{split}
    \end{equation}
    Plugging \eqref{eq:|nablamu|^2unknown_t} and \eqref{eq:nablamu_t_2} to \eqref{d^2F/dt^2_0}, we have \eqref{eq:second-derivative-F}

\end{proof}

We prepare the following lemma to estimate $\nabla^2 (d(x)\log\unknown)$ in \eqref{eq:second-derivative-F}
\begin{lem}
    \label{nabla^2(d(x)f^(alpha-1))}
    Let $\unknown$ be a classical solution of \eqref{eq:Nonlinear-Fokker-Planck} on $\overline{\Omega}\times [0, \infty)$. Then, 
    \begin{equation}
    \label{nabla^2(d(x)f^(alpha-1))1}
        \begin{split}
        &\quad
         \int_{\Omega}(\nabla\mu\cdot\nabla^2(d(x)\log\unknown)\nabla\mu)\unknown\, dx
        \\
        &=   
        -
        \int_\Omega
        \nabla(d(x)\log\unknown)\cdot
        \left(
        \nabla^2\mu\nabla\mu
        +
        \Delta\mu \nabla\mu
        +
        (\nabla\mu\cdot\nabla\log \unknown)\nabla\mu
        \right)
        \unknown\, dx. 
        \end{split}
    \end{equation}
\end{lem}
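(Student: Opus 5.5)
The identity follows from a single integration by parts, moving one derivative off the Hessian $\nabla^2(d(x)\log\unknown)$ and onto the remaining factors. Set $v:=d(x)\log\unknown$. In index notation the left-hand side of \eqref{nabla^2(d(x)f^(alpha-1))1} is
\begin{equation*}
\int_\Omega \sum_{i,j=1}^n \partial_i(\partial_j v)\,\mu_{x_i}\mu_{x_j}\unknown\,dx .
\end{equation*}
I will integrate by parts in the variable $x_i$, treating $\partial_j v$ as the function being differentiated and $\mu_{x_i}\mu_{x_j}\unknown$ as the test factor. This is legitimate because $\unknown$ is a classical solution which, by Theorem~\ref{thm:apriori_estimate}, is bounded between positive constants. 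Hence $\log\unknown$ and $\mu$ are smooth enough up to the boundary.

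By the divergence theorem this gives
\begin{equation*}
\int_{\partial\Omega}\sum_{j}\partial_j v\,\mu_{x_j}\unknown\,(\nabla\mu\cdot\bnu)\,dS-\int_\Omega\sum_{i,j}\partial_j v\,\partial_i\bigl(\mu_{x_i}\mu_{x_j}\unknown\bigr)\,dx .
\end{equation*}
The boundary term vanishes. Indeed, the boundary condition of \eqref{eq:Nonlinear-Fokker-Planck} reads $\unknown\nabla\mu\cdot\bnu=0$, and since $\unknown\ge\Cr{const:f>c}>0$ this is the Neumann condition $\nabla\mu\cdot\bnu=0$ already used in \eqref{eq:potential_equation}. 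This boundary step is the only point needing care, since it is where the natural boundary condition enters. It is handled exactly by the positivity from Theorem~\ref{thm:apriori_estimate}.

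Next I expand the derivative with the product rule:
\begin{equation*}
\sum_i\partial_i\bigl(\mu_{x_i}\mu_{x_j}\unknown\bigr)=\Delta\mu\,\mu_{x_j}\unknown+\sum_i\mu_{x_i}\mu_{x_ix_j}\unknown+\mu_{x_j}\sum_i\mu_{x_i}\unknown_{x_i}.
\end{equation*}
Reading the three terms as vectors in the index $j$:
\begin{itemize}
\item the first term is $\Delta\mu\nabla\mu\,\unknown$;
\item the second term is $\nabla^2\mu\nabla\mu\,\unknown$, using the symmetry of the Hessian;
\item the third term is $(\nabla\mu\cdot\nabla\log\unknown)\nabla\mu\,\unknown$, after writing $\nabla\unknown=\unknown\nabla\log\unknown$.
\end{itemize}
Contracting with $\partial_j v$, i.e.\ dotting with $\nabla(d(x)\log\unknown)$, and keeping the minus sign yields exactly the right-hand side of \eqref{nabla^2(d(x)f^(alpha-1))1}.
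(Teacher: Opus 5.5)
Your proof is correct and follows the paper's argument. The paper likewise integrates by parts once, moving the derivative in the index $j$ rather than $i$; this is immaterial by symmetry of the Hessian. It then discards the boundary integral of $(\nabla\mu\cdot\nabla(d(x)\log\rho))\,\rho\nabla\mu\cdot\bnu$ using the natural boundary condition, and expands $\sum_j\partial_j(\mu_{x_i}\mu_{x_j}\rho)$ into the same three terms. The positive lower bound on $\rho$ is not needed for the boundary term: the integrand already contains the factor $\rho\nabla\mu\cdot\bnu$, which vanishes directly by the boundary condition in \eqref{eq:Nonlinear-Fokker-Planck}.
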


\begin{proof}
    We compute $\nabla^2(d(x)\log\unknown)$. We denote 
    \begin{equation}
        \nabla^2 (d(x)\log\unknown)=((d(x)\log\unknown)_{x_ix_j})_{i, j}. 
    \end{equation}
    Then,  by direct calculations, we obtain
    \begin{equation*}
        \begin{split}(\nabla\mu\cdot\nabla^2(d(x)\log\unknown)\nabla\mu)\unknown
            &=  \sum_{i=1}^n\left(\mu_{x_i}\left(\sum_{j=1}^n(d(x)\log\unknown)_{x_ix_j}\mu_{x_j}\right)\right)\unknown
            \\
            &=
            \sum_{i, j=1}^n\left(\mu_{x_i}(d(x)\log\unknown)_{x_i}\mu_{x_j}\unknown\right)_{x_j}
        \\
            &\quad
            -\sum_{i, j=1}^n\left((d(x)\log\unknown)_{x_i}(\mu_{x_i}\mu_{x_j}\unknown)_{x_j}\right).
        \end{split}
    \end{equation*}
    The first term of the right-hand side turns into
    \begin{equation*}
        \sum_{i, j=1}^n\left(\mu_{x_i}(d(x)\log\unknown)_{x_i}\mu_{x_j}\unknown\right)_{x_j}=\Div((\nabla\mu\cdot\nabla(d(x)\log\unknown)\unknown\nabla\mu).
    \end{equation*}
    By calculating the second term of the right-hand side,  we obtain
    \begin{equation*}
        \begin{split}
            &\qquad
            -\sum_{i, j=1}^n\left((d(x)\log\unknown)_{x_i}(\mu_{x_i}\mu_{x_j}\unknown)_{x_j}\right)
            \\
            &=
            -\sum_{i, j=1}^n\left((d(x)\log\unknown)_{x_i}
            \left(
            (
            \mu_{x_i x_j}\mu_{x_j})\unknown
            +
            \mu_{x_i}\mu_{x_jx_j}\unknown
            +
            \mu_{x_i}\mu_{x_j}\unknown_{x_j}
            \right)
            \right)\\
            &=
            -(\nabla(d(x)\log\unknown)\cdot\nabla^2\mu\nabla\mu)\unknown
            -(\nabla(d(x)\log\unknown)\cdot\nabla\mu)\Delta\mu \unknown
            \\
            &\quad
            -(\nabla(d(x)\log\unknown)\cdot\nabla\mu)(\nabla\mu\cdot\nabla\log \unknown)\unknown
            .
        \end{split}
    \end{equation*}
    Thus, we obtain
    \begin{equation}
    \label{eq:2.Hesse-Diffusion}
        \begin{split}
        &\quad
        (\nabla\mu\cdot\nabla^2(d(x)\log\unknown)\nabla\mu)\unknown
        \\
        &=
        \Div(\nabla\mu\cdot\nabla(d(x)\log\unknown)\unknown\nabla\mu)
        \\
        &\quad
        -\nabla(d(x)\log\unknown)\cdot
        \left(
        \unknown\nabla^2\mu\nabla\mu
        +
        \Delta\mu \unknown\nabla\mu
        +
        (\nabla\mu\cdot\nabla \log\unknown)\unknown\nabla\mu
        \right).
        \end{split}
    \end{equation}
    Therefore,  integrating on $\Omega$ of both sides of \eqref{eq:2.Hesse-Diffusion},  we have \eqref{nabla^2(d(x)f^(alpha-1))1}
    since the integral of the first term in the right-hand side of \eqref{eq:2.Hesse-Diffusion} vanishes by using the boundary condition of \eqref{eq:Nonlinear-Fokker-Planck} with the divergence theorem. 
\end{proof}

By the computation of the integrand of \eqref{nabla^2(d(x)f^(alpha-1))1}, we have
    \begin{equation}
        \begin{split}
            &\quad \nabla(d(x)\log\unknown)\cdot
        \left(
        \nabla^2\mu\nabla\mu
        +
        \Delta\mu \nabla\mu
        +
        (\nabla\mu\cdot\nabla\log \unknown)\nabla\mu
        \right)
        \\
        &=d(x)((\nabla \log\unknown\cdot\nabla^2\mu\nabla\mu)
        +(\nabla \log\unknown\cdot\nabla\mu)\Delta\mu
        +(\nabla \log\unknown\cdot\nabla\mu)^2)
        \\
        &\quad
        +\log\unknown\nabla d(x)\cdot(\nabla^2\mu\nabla\mu
        +\Delta\mu\nabla\mu
        +(\nabla \log\unknown\cdot\nabla\mu)\nabla\mu).
        \end{split}
    \end{equation}
Using \eqref{nabla^2(d(x)f^(alpha-1))1} in \eqref{eq:second-derivative-F},  the time derivative of $\D[\unk](t)$  can be expressed as follows.
    \begin{equation}
    \label{d^2F/dt^2_4}
        \begin{split}
            \frac{d}{dt}\D[\unknown](t)&=-2\int_\Omega (\nabla\mu\cdot\nabla^2\phi(x)\nabla\mu)\unknown\, dx
            \\
            &\quad
            +2\int_\Omega d(x)(\nabla \log\unknown\cdot\nabla^2\mu\nabla\mu)\unknown\, dx
            \\
            &\quad
            -2\int_\Omega d(x)(\nabla\log \unknown\cdot \nabla\mu)\Delta\mu \unknown\, dx
            \\
            &\quad
            -2\int_\Omega d(x) (\Delta\mu)^2\unknown\, dx
            \\
            &\quad
            +2\int_\Omega \log\unknown(\nabla d(x)\cdot\nabla^2\mu\nabla\mu)\unknown\, dx 
            \\
            &\quad
            +2\int_\Omega \log\unknown(\nabla d(x)\cdot\nabla\mu)\Delta\mu \unknown\, dx
            \\
            &\quad
            +2\int_\Omega \log \unknown(\nabla d(x)\cdot\nabla\mu)(\nabla\log \unknown\cdot\nabla\mu)\unknown\, dx.
        \end{split}
    \end{equation}
If $d$ is a constant,  \eqref{d^2F/dt^2_4} coincides with the previous
result about the entropy dissipation methods by
\cites{MR1853037, MR3497125},  that is, the last three terms of the
right-hand side in \eqref{d^2F/dt^2_4} appear in the effect of
inhomogeneity of the diffusion.

We proceed with the computation according to the entropy dissipation methods.
We consider the third term in the right-hand side of
\eqref{d^2F/dt^2_4}.
 
\begin{lem}
    \label{nablafnablamuDeltamu}
    Let $\unknown$ be a classical solution of \eqref{eq:Nonlinear-Fokker-Planck} on $\overline{\Omega}\times [0, \infty)$. Then
    \begin{equation}
    \label{D(nabla f nablamu)Deltamu f^alpha-1}
        \begin{split}
            &\quad
            2\int_\Omega d(x)(\nabla \log\unknown\cdot \nabla\mu)\Delta\mu\unknown\,dx
            \\
            &=
            -
            2\int_\Omega d(x)(\Delta\mu)^2\unknown\, dx
            -
            \int_\Omega d(x)\Delta (|\nabla\mu|^2)\unknown\, dx
            \\
            &\quad
            +
            2\int_\Omega d(x) |\nabla^2\mu|^2\unknown\, dx
            -
            2\int_\Omega (\nabla d(x)\cdot \nabla\mu)\Delta\mu \unknown\, dx.
        \end{split}
    \end{equation}
\end{lem}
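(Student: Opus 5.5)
The plan is to integrate by parts once in the left-hand side, moving the gradient off $\unknown$. The resulting term with $\nabla\Delta\mu$ is then rewritten with the pointwise Bochner identity. No inequality is needed; the statement is an exact identity.

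\textbf{Step 1 (integration by parts).} Since $\unknown\nabla\log\unknown=\nabla\unknown$, the left-hand side equals $2\int_\Omega d(x)\Delta\mu\,(\nabla\unknown\cdot\nabla\mu)\,dx$. Integrating by parts gives
\begin{equation*}
\int_\Omega d\,\Delta\mu\,\nabla\mu\cdot\nabla\unknown\,dx
=\int_{\partial\Omega} d\,\Delta\mu\,\unknown\,\nabla\mu\cdot\bnu\,dS
-\int_\Omega \Div\bigl(d\,\Delta\mu\,\nabla\mu\bigr)\unknown\,dx .
\end{equation*}
The boundary integral vanishes because $\unknown\nabla\mu\cdot\bnu=0$ on $\partial\Omega$ by the boundary condition of \eqref{eq:Nonlinear-Fokker-Planck}. (Equivalently, $\nabla\mu\cdot\bnu=0$, since $\unknown>0$ by Theorem \ref{thm:apriori_estimate}.) Expanding the divergence,
\begin{equation*}
\Div(d\,\Delta\mu\,\nabla\mu)=\Delta\mu\,(\nabla d\cdot\nabla\mu)+d\,\nabla\Delta\mu\cdot\nabla\mu+d\,(\Delta\mu)^2 .
\end{equation*}
Multiplying by $2$, the left-hand side becomes
\begin{equation*}
-2\int_\Omega(\nabla d\cdot\nabla\mu)\Delta\mu\,\unknown\,dx
-2\int_\Omega d\,(\nabla\Delta\mu\cdot\nabla\mu)\unknown\,dx
-2\int_\Omega d\,(\Delta\mu)^2\unknown\,dx .
\end{equation*}

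\textbf{Step 2 (Bochner identity).} Use the pointwise identity
\begin{equation*}
\tfrac12\Delta(|\nabla\mu|^2)=|\nabla^2\mu|^2+\nabla\mu\cdot\nabla\Delta\mu .
\end{equation*}
It follows from $\sum_{i,j}(\mu_{x_j}\mu_{x_j x_i})_{x_i}$ and the commutation of partial derivatives. It gives
\begin{equation*}
-2\,\nabla\Delta\mu\cdot\nabla\mu=2|\nabla^2\mu|^2-\Delta(|\nabla\mu|^2).
\end{equation*}
Substituting this into the middle integral of Step 1 yields
\begin{equation*}
2\int_\Omega d\,|\nabla^2\mu|^2\unknown\,dx-\int_\Omega d\,\Delta(|\nabla\mu|^2)\unknown\,dx .
\end{equation*}
Collecting the four terms gives exactly \eqref{D(nabla f nablamu)Deltamu f^alpha-1}. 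The term $\int_\Omega d\,\Delta(|\nabla\mu|^2)\unknown\,dx$ is deliberately left unintegrated here, since it involves a boundary term that presumably is treated later using convexity of $\Omega$.

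\textbf{Main obstacle.} There is no real analytic difficulty. The delicate points are bookkeeping: the boundary integral in Step 1 must vanish, which is where the natural boundary condition and the positivity from Theorem \ref{thm:apriori_estimate} enter. We also need third derivatives of $\mu$ up to the boundary, which we take as part of the notion of classical solution, with $\mu$ being $C^3$ in $x$ on $\overline{\Omega}$. Finally, the $\nabla d$ term generated by differentiating $d(x)$ must be tracked carefully, since it is exactly the inhomogeneity term appearing last in the statement.
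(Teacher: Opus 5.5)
Your proof is correct and is essentially the paper's argument. The paper first splits $d\,\nabla\unknown=\nabla(d\unknown)-\unknown\nabla d$ and integrates by parts against $d\unknown$, whereas you integrate by parts against $\unknown$ and let the product rule in $\Div(d\,\Delta\mu\,\nabla\mu)$ produce the $\nabla d$ term; both then use the same commutation (Bochner) identity, and in both the boundary flux vanishes because $\unknown\nabla\mu\cdot\bnu=0$.
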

\begin{proof}
    Since $d(x)\nabla (\log \unknown) \unknown=d(x)\nabla \unknown$, we obtain 
    \begin{equation}
        \label{eq:2.Pohozaev-method-3}
        d(x)(\nabla \log \unknown)\unknown
        =
        (d(x)\nabla\unk)
        =
        \nabla (d(x)\unknown)
        -
        \unk\nabla d(x).
    \end{equation}
    Next,  we compute $(\nabla(d(x)\unknown)\cdot\nabla\mu)\Delta\mu$. 
    Expressing the vector in component form,  we obtain
    \begin{equation}
        \label{eq:2.Pohozaev-method-1}
        (\nabla(d(x)\unknown)\cdot\nabla\mu)\Delta\mu
        =\sum_{i, j=1}^n(d(x)\unknown)_{x_i}\mu_{x_i}\mu_{x_jx_j}.
    \end{equation}
    We make a divergence form in the right-hand side of \eqref{eq:2.Pohozaev-method-1} as follows:
    \begin{equation}
        \label{eq:2.Pohozaev-method-2}
        (d(x)\unknown)_{x_i}\mu_{x_i}\mu_{x_jx_j}
        =
        (d(x)\unknown\mu_{x_i}\mu_{x_jx_j})_{x_i}
        -
        d(x)(\mu_{x_i}\mu_{x_jx_j})_{x_i}\unknown.
    \end{equation}
    Compute the second term of the right-hand side of \eqref{eq:2.Pohozaev-method-2}
    as
    \begin{equation}
        (\mu_{x_i}\mu_{x_jx_j})_{x_i}
        =
        \mu_{x_ix_i}\mu_{x_jx_j}
        +
        \mu_{x_i}\mu_{x_jx_jx_i}
        =
        \mu_{x_ix_i}\mu_{x_jx_j}
        +
        (\mu_{x_i}\mu_{x_ix_j})_{x_j}
        -
        \mu_{x_ix_j}\mu_{x_ix_j}.
    \end{equation}
    Note that $\mu_{x_i}\mu_{x_ix_j}=\frac{1}{2}(\mu_{x_i}^2)_{x_j}$. Thus,  we arrive at
    \begin{equation}
    \label{eq:nabla(d unknown^alpha)}
        \begin{split}
        &\quad 
        (\nabla(d(x)\unknown)\cdot\nabla\mu)\Delta\mu
        \\
        &=
        \sum_{i, j=1}^n
        \biggl(
         (d(x)\unknown\mu_{x_i}\mu_{x_jx_j})_{x_i}
         -d(x)\mu_{x_ix_i}\mu_{x_jx_j}\unknown
         \\
         &\qquad
         -\frac{d(x)}{2}(\mu_{x_i}^2)_{x_jx_j}\unknown
         +d(x)\mu_{x_ix_j}\mu_{x_ix_j}\unknown
          \biggr)
          \\
        &=
        \Div(d(x)\unknown \nabla\mu\Delta\mu)
        -d(x)(\Delta\mu)^2\unknown
        \\
        &\quad
        -\frac{1}{2}d(x)\Delta(|\nabla\mu|^2)\unknown
        +d(x)|\nabla^2\mu|^2\unknown. 
        \end{split}
    \end{equation}
    Therefore,  integrating on $\Omega$ of both side of \eqref{eq:nabla(d unknown^alpha)},  we have, 
    \begin{equation*}
        \begin{split}
            2\int_\Omega(\nabla (d(x)\unknown)\cdot\nabla\mu)\Delta\mu\, dx
            &=-2\int_\Omega d(x)(\Delta\mu)^2\unknown\, dx\\
            &\quad-\int_\Omega d(x)\Delta(|\nabla\mu|^2)\unknown\, dx\\
            &\quad+2\int_\Omega d(x)|\nabla^2\mu|^2\unknown\, dx, 
        \end{split}
    \end{equation*}
    since the integral of the first term in the right-hand side of \eqref{eq:nabla(d unknown^alpha)} vanishes by using the boundary condition of \eqref{eq:Nonlinear-Fokker-Planck} with the divergence theorem. Using \eqref{eq:2.Pohozaev-method-3}, we obtain \eqref{D(nabla f nablamu)Deltamu f^alpha-1}.
\end{proof}

We next calculate the second term on the right-hand side of \eqref{d^2F/dt^2_4}.

\begin{lem}
    \label{nabla^2mu's-term}
     Let $\unknown$ be a classical solution of \eqref{eq:Nonlinear-Fokker-Planck} on $\overline{\Omega}\times [0, \infty)$. Then
    \begin{equation}
    \label{nabla^2mu's-term_1}
        \begin{split}
            2\int_\Omega d(x)(\nabla \log\unknown\cdot \nabla^2\mu\nabla\mu)\unknown\, dx
            &
            =-2\int_\Omega (\nabla d(x)\cdot\nabla^2\mu\nabla\mu)\unknown\, dx
            \\
            &\quad
            +\int_{\partial\Omega} d(x)\unknown\nabla (|\nabla\mu|^2)\cdot \bnu\, d\sigma
            \\
            &\quad
            -\int_\Omega d(x) \Delta( |\nabla\mu|^2)\unknown\, dx.
        \end{split}
    \end{equation}
    
\end{lem}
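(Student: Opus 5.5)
The identity should follow from a single integration by parts. Two facts drive it. First, $\nabla^2\mu\nabla\mu = \frac12\nabla(|\nabla\mu|^2)$, computed componentwise as $\sum_j \mu_{x_ix_j}\mu_{x_j} = \frac12(\sum_j\mu_{x_j}^2)_{x_i}$. Second, as in \eqref{eq:2.Pohozaev-method-3}, $d(x)\unknown\nabla\log\unknown = \nabla(d(x)\unknown) - \unknown\nabla d(x)$. Using both, the left-hand side of \eqref{nabla^2mu's-term_1} becomes
\begin{equation*}
\int_\Omega \nabla(d(x)\unknown)\cdot\nabla(|\nabla\mu|^2)\,dx - 2\int_\Omega (\nabla d(x)\cdot\nabla^2\mu\nabla\mu)\unknown\,dx .
\end{equation*}
Here we have rewritten $\int_\Omega \unknown\nabla d\cdot\nabla(|\nabla\mu|^2)\,dx$ back as $2\int_\Omega(\nabla d\cdot\nabla^2\mu\nabla\mu)\unknown\,dx$. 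This already produces the first term on the right of \eqref{nabla^2mu's-term_1}.

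Next, apply the divergence theorem to the remaining integral. Using the identity
\begin{equation*}
\nabla(d\unknown)\cdot\nabla(|\nabla\mu|^2) = \Div\bigl(\nabla(d\unknown)\,|\nabla\mu|^2\bigr) - \Delta(d\unknown)|\nabla\mu|^2
\end{equation*}
would put the boundary term in the wrong form. The better choice is to move derivatives onto $|\nabla\mu|^2$:
\begin{equation*}
\nabla(d\unknown)\cdot\nabla(|\nabla\mu|^2) = \Div\bigl(d\unknown\,\nabla(|\nabla\mu|^2)\bigr) - d\unknown\,\Delta(|\nabla\mu|^2).
\end{equation*}
Integrating over $\Omega$ then gives exactly
\begin{equation*}
\int_{\partial\Omega} d(x)\unknown\,\nabla(|\nabla\mu|^2)\cdot\bnu\,d\sigma - \int_\Omega d(x)\Delta(|\nabla\mu|^2)\unknown\,dx,
\end{equation*}
which are the remaining two terms of \eqref{nabla^2mu's-term_1}. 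Combining the two steps yields the lemma.

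The only delicate point is the boundary term. Unlike in Lemmas \ref{nabla^2(d(x)f^(alpha-1))} and \ref{nablafnablamuDeltamu}, the flux here is $d\unknown\nabla(|\nabla\mu|^2)$. This is not proportional to $\nabla\mu$, so the Neumann condition $\nabla\mu\cdot\bnu=0$ from \eqref{eq:potential_equation} does not make it vanish, and it must be kept. The statement keeps it explicitly, so the lemma itself is just this bookkeeping. Estimating the boundary term is left for later: presumably $\nabla(|\nabla\mu|^2)\cdot\bnu = 2\nabla^2\mu\nabla\mu\cdot\bnu \le 0$ on a convex domain when $\nabla\mu\cdot\bnu=0$, via the second fundamental form. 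The required regularity, $\mu\in C^3$ up to the boundary, is assumed from the solution being classical.
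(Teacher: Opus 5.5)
Your proof is correct and follows essentially the same route as the paper. Both arguments split $d\unknown\nabla\log\unknown=\nabla(d\unknown)-\unknown\nabla d$ via \eqref{eq:2.Pohozaev-method-3}, use $\nabla^2\mu\nabla\mu=\tfrac12\nabla(|\nabla\mu|^2)$ to rewrite the $\nabla(d\unknown)$ part as $\tfrac12\Div(d\unknown\nabla|\nabla\mu|^2)-\tfrac{d}{2}\Delta(|\nabla\mu|^2)\unknown$, and keep the resulting boundary flux; the only cosmetic difference is that you apply the gradient identity in vector form up front, whereas the paper does the divergence-form rewriting componentwise.
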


\begin{proof}
    Taking the inner product of $\nabla^2\mu\nabla\mu$ both side of \eqref{eq:2.Pohozaev-method-3},  we have
    \begin{equation}
    \label{eq:3.pohozaev-method_2.0}
         d(x)(\nabla \unk \cdot \nabla^2\mu\nabla\mu)=(\nabla(d(x)\unknown)\cdot\nabla^2\mu\nabla\mu)-(\nabla d(x)\cdot \nabla^2\mu\nabla\mu)\unk.
    \end{equation}
    Next,  we compute $(\nabla(d(x)\unk)\cdot\nabla^2\mu\nabla\mu)$. Writing a vector in component form,  we obtain
    \begin{equation}
    \label{eq:3.Pohozaev-method_2.1}
    (\nabla(d(x)\unk)\cdot\nabla^2\mu\nabla\mu)
    =\sum_{i, j=1}^n(d(x)\unknown)_{x_i}\mu_{x_ix_j}\mu_{x_j}.
    \end{equation}
    We make a divergence form on the right-hand side of \eqref{eq:3.Pohozaev-method_2.1} as follows: 
    \begin{equation}
    \label{eq:3.pohozaev-method_2.2}
        (d(x)\unknown)_{x_i}\mu_{x_ix_j}\mu_{x_j}=(d(x)\unk\mu_{x_ix_j}\mu_{x_j})_{x_i}-d(x)\unk(\mu_{x_ix_j}\mu_{x_j})_{x_i}.
    \end{equation}
   Note that $\mu_{x_ix_j}\mu_{x_j}=\frac{1}{2}((\mu_{x_j})^2)_{x_i}$. Thus,  we arrive at 
   \begin{equation}
   \label{eq:3.pohozaev-method_2.3}
    \begin{split}
        (\nabla(d(x)\unk)\cdot\nabla^2\mu\nabla\mu)
        &
        =\sum_{i, j=1}^n\left(\frac{1}{2}(d(x)\unk(\mu_{x_j})^2_{x_i})_{x_i}-\frac{d(x)}{2}(\mu_{x_j})^2_{x_ix_i}\unknown\right)
        \\
        &
        =\frac{1}{2}\Div(d(x)\unknown\nabla|\nabla\mu|^2)-\frac{d(x)}{2}\Delta(|\nabla\mu|^2)\unknown
    \end{split}
   \end{equation}
   Therefore,  integrating on $\Omega$ of both side of \eqref{eq:3.pohozaev-method_2.3},  we have, 
   \begin{equation}
        \begin{split}
            2\int_\Omega (\nabla(d(x)\unknown)\cdot \nabla^2\mu\nabla\mu)\, dx
            &=
            \int_\Omega \Div(d(x)\unknown\nabla|\nabla\mu|^2)\, dx
            \\
            &\quad
            -\int_\Omega d(x) \Delta(|\nabla\mu|^2)\unk\, dx.
        \end{split}
   \end{equation}
   
    Using \eqref{eq:3.pohozaev-method_2.0} together with the divergence theorem,  we obtain \eqref{nabla^2mu's-term_1}. 
\end{proof}

Plugging \eqref{D(nabla f nablamu)Deltamu f^alpha-1} and \eqref{nabla^2mu's-term_1} into \eqref{d^2F/dt^2_4},  we obtain 
\begin{equation}
    \label{eq:exp_decay.rate_dissipation}
    \begin{split}
        \frac{d}{dt}\D[\unknown](t)
        &
        =-2\int_\Omega (\nabla\mu\cdot\nabla^2\phi(x)\nabla\mu)\unknown\, dx
        -
        2\int_\Omega d(x) |\nabla^2\mu|^2\unknown\, dx
        \\
        &\quad
        +\int_{\partial\Omega} d(x)\unknown\nabla (|\nabla\mu|^2)\cdot \bnu\, d\sigma
        \\
        &\quad
        +2\int_\Omega (\log\unknown-1)(\nabla d(x)\cdot\nabla^2\mu\nabla\mu)\unknown\, dx 
        \\
        &\quad
        +2\int_\Omega (\log\unknown+1)(\nabla d(x)\cdot\nabla\mu)\Delta\mu \unknown\, dx
        \\
        &\quad
        +2\int_\Omega \log\unknown(\nabla d(x)\cdot\nabla\mu)(\nabla \log\unknown\cdot\nabla\mu)\unknown\, dx
        \\
        &=:
        -2I_1-2I_2+I_3+2I_4+2I_5+2I_6.
    \end{split}
\end{equation}

Since $\unk$ is positive, we have $\nabla\mu\cdot\bnu=0$ on $\partial\Omega$. Then,  it is well-known that the outer normal derivative of $|\nabla\mu|^2$ can be written as
\begin{equation}
    \nabla|\nabla\mu|^2\cdot\bnu
    =
    2B_x(\nabla\mu, \nabla\mu), 
\end{equation}
at $x\in\partial\Omega$,  where $B_x$ is the second fundamental form at $x\in \partial \Omega$ (cf. \cite{MR555661}*{Lemma 5.3}, \cite{MR3348119}*{Lemma 4.2}). From the convexity assumption of $\Omega$,  the principal curvature of $\partial\Omega$ is non-positive thus we have
$I_3\leq0$. Therefore,  we obtain
\begin{equation}
    \label{second-derivative-F}
    \frac{d}{dt}\D[\unknown](t)\le -2I_1-2I_2+2I_4+2I_5+2I_6.
\end{equation}

\begin{remark}
    If $d(x)$ is constant,  then $\nabla d(x)=0$ so \eqref{second-derivative-F} can be written as
    \begin{equation}
        \frac{d}{dt}\D[\unknown](t)\le -2I_1-2I_2, 
    \end{equation}
    which was deduced by \cites{MR1842428,MR1639292,MR1812873}. The above computation is based on \cite{MR3497125}*{\S 2.1}. Inequality \eqref{second-derivative-F} is an extension of the previous result for the case where $d(x)$ is not constant.
    
    Using the assumption $\nabla^2 \phi(x) \ge \lambda I$, we have $\nabla\mu\cdot\nabla^2\phi(x)\nabla\mu\ge\lambda|\nabla\mu|^2$ hence together with $I_2 \ge 0$, we have
    \begin{equation}
        \frac{d}{dt}\D[\unknown](t)\le -\lambda \D[\unknown](t).
    \end{equation}
    Therefore, we obtain the exponential decay of $\D[\unknown](t)$ by from the Gronwall inequality. 
\end{remark}
Hereafter, we handle the terms $I_4, I_5$ and $I_6$. We prepare the following lemma. 

\begin{lem}
\label{lem:I5}
    Let $\unknown$ be a bounded, positive classical solution of \eqref{eq:Nonlinear-Fokker-Planck} on $\overline{\Omega}\times [0, \infty)$. Then, for any positive constant $\delta_1>0$ 
    \begin{equation}
    \label{eq:2.Est_Grad_Diffusion_1}
        \begin{split}
            &\quad
            \left|\int_\Omega (\log \unknown-1) (\nabla  d(x)\cdot \nabla^2\mu\nabla\mu)\unknown\, dx\right|
            \\
            &\le \frac{1}{2\delta_1}\left\|\sqrt{d}\nabla\log d\right\|_\infty^2(\|\log\unknown\|_\infty+1)^2\int_\Omega |\nabla\mu|^2\unknown\, dx
            \\
            &\quad
            +
            \frac{\delta_1}{2}\int_{\Omega} d(x)|\nabla^2\mu|^2\unknown\, dx.
        \end{split}
    \end{equation}
\end{lem}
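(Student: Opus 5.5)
The estimate follows from a pointwise Cauchy--Schwarz bound on the integrand combined with Young's inequality. No integration by parts is needed, so no boundary terms arise. The only care required is in distributing the weights $d(x)$ and $\unknown$ so that the Hessian term carries exactly the factor $d(x)\unknown$ appearing in $I_2$.

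First, pointwise in $x$, use the operator norm bound together with the Frobenius norm bound $|\nabla^2\mu\,\nabla\mu|\le |\nabla^2\mu|\,|\nabla\mu|$ to get
\begin{equation*}
|(\log\unknown-1)(\nabla d\cdot\nabla^2\mu\nabla\mu)|\unknown
\le (|\log\unknown|+1)\,|\nabla d|\,|\nabla^2\mu|\,|\nabla\mu|\,\unknown .
\end{equation*}
Next, rewrite $|\nabla d| = \sqrt{d}\,|\sqrt{d}\,\nabla\log d|$, which is valid because $d>0$ and $\nabla d = d\nabla\log d$. Then split the integrand as the product of
\begin{equation*}
a=\sqrt{d}\,|\nabla^2\mu|\sqrt{\unknown}, \qquad b=(|\log\unknown|+1)\,|\sqrt{d}\nabla\log d|\,|\nabla\mu|\sqrt{\unknown}.
\end{equation*}

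Apply Young's inequality in the form $ab\le \frac{\delta_1}{2}a^2+\frac{1}{2\delta_1}b^2$. In the $b^2$ term, bound $(|\log\unknown|+1)^2\le(\|\log\unknown\|_\infty+1)^2$ and $|\sqrt d\nabla\log d|^2\le\|\sqrt d\nabla\log d\|_\infty^2$, then pull both constants out of the integral. The supremum norm $\|\log\unknown\|_\infty$ is finite because $\unknown$ is bounded and bounded away from zero, which is Theorem \ref{thm:apriori_estimate}. Likewise $\|\sqrt d\nabla\log d\|_\infty$ is finite by $d\in C^2(\overline\Omega)$ and \eqref{assume:d>c}. Finally, bound the absolute value of the integral by the integral of the absolute value; integrating over $\Omega$ then gives exactly \eqref{eq:2.Est_Grad_Diffusion_1}.

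There is no real obstacle in this argument. The one point to check is that the weight $d$ is split correctly, one factor $\sqrt d$ into $a$ and one into $b$, so that the coefficient is $\sqrt d\nabla\log d$ rather than $\nabla d$. Getting this right is what allows the Hessian term to be absorbed later by $-2I_2$, and it also makes the constant depend on $\|\nabla\log d\|_\infty$, which is the quantity assumed small in Theorem \ref{thm:exp_decay}.
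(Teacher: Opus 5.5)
Your proof is correct and is essentially the paper's argument. The paper applies the triangle inequality, then Cauchy--Schwarz to the integrals with weights $|\nabla d|^2/d$ and $d$, then Young's inequality and $\nabla d = d\nabla\log d$. Applying Young's inequality pointwise before integrating, as you do, is an equivalent route to the same constants.
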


\begin{proof}
    From the triangle inequality, we have
    \begin{equation*}
        \begin{split}
            &\quad 
            \left|\int_\Omega (\log \unknown-1)(\nabla d(x)\cdot\nabla^2\mu\nabla\mu)\unknown\, dx\right|
            \\
            &\le
            \int_\Omega  (|\log\unknown|+1)|\nabla d(x)||\nabla^2\mu||\nabla\mu|\unknown\, dx.
        \end{split}
    \end{equation*}
    Since $d(x)>0$,  it follows by H\"{o}lder's inequality and Young's inequality that for any positive constant $\delta_1>0$ 
    \begin{equation*}
        \begin{split}
        &\quad
           \int_\Omega (|\log\unknown|+1)|\nabla d(x)||\nabla^2\mu||\nabla\mu|\unknown \, dx
           \\
           &\le 
           \left(\int_\Omega \frac{1}{d(x)}|\nabla d(x)|^2(|\log \unknown|+1)^2|\nabla\mu|^2\unknown\, dx\right)^{\frac{1}{2}}\left(\int_\Omega
           d(x)|\nabla^2\mu|^2\unknown\, dx\right)^{\frac{1}{2}}
           \\
           &\le
           \frac{1}{2\delta_1}\left\|\frac{\nabla d}{\sqrt{d}}\right\|_\infty^2(\|\log\unknown\|_\infty+1)^2\int_\Omega|\nabla\mu|^2\unknown\, dx
            \\
            &\quad
            +\frac{\delta_1}{2}\int_\Omega d(x) |\nabla^2\mu|^2\unknown\, dx.
        \end{split}
    \end{equation*}
    Therefore using $d(x)\nabla \log d(x)=\nabla d(x)$, we obtain \eqref{eq:2.Est_Grad_Diffusion_1} .
\end{proof}

From \eqref{eq:2.Est_Grad_Diffusion_1},  we obtain

\begin{equation}
    \label{eq:2.Estimate_I5}
    2|I_4|
    \le 
    \frac{1}{\delta_1}\left\|\sqrt{d}\nabla \log d\right\|_\infty^2(\|\log\unknown\|_\infty+1)^2
    \D[\unk](t)
    +
    \delta_1I_2.
\end{equation}

Next, we estimate $I_5$ by $\D[\unk](t)$ and $I_2$.

\begin{lem}
    \label{lem:I6}
    Let $\unknown$ be a bounded, positive classical solution of \eqref{eq:Nonlinear-Fokker-Planck} on $\overline{\Omega}\times [0, \infty)$. Then for any positive constant $\delta_2>0$,
    \begin{equation}
        \label{eq:2.Est_Grad_Diffusion_2}
        \begin{split}
        &\quad\left|\int_\Omega (\log\unknown+1)(\nabla  d(x)\cdot\nabla\mu)\Delta\mu \unknown\, dx\right|\\
        &\le
        \frac{1}{2\delta_2}(\|\log\unknown\|_\infty+1)^2\|\sqrt{d}\nabla \log d\|_\infty^2\int_\Omega |\nabla\mu|^2\unknown\, dx
        \\
        &\quad
        +\frac{\delta_2 n}{2}\int_\Omega d(x)|\nabla^2\mu|^2\unknown \, dx.
        \end{split}
    \end{equation}
\end{lem}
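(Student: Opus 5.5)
The argument will mirror the proof of Lemma \ref{lem:I5}: bound the integrand pointwise, apply the weighted Cauchy--Schwarz inequality with weights $d(x)^{-1}$ and $d(x)$, and finish with Young's inequality. The only new ingredient is that $\Delta\mu$ must be dominated by the full Hessian, $(\Delta\mu)^2\le n|\nabla^2\mu|^2$; this is where the factor $n$ comes from.

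First, by the triangle inequality,
\begin{equation*}
\left|\int_\Omega (\log\unknown+1)(\nabla d(x)\cdot\nabla\mu)\Delta\mu\,\unknown\,dx\right|
\le
\int_\Omega (|\log\unknown|+1)|\nabla d(x)||\nabla\mu||\Delta\mu|\,\unknown\,dx .
\end{equation*}
Next, split the integrand as $\bigl(d^{-1/2}|\nabla d|(|\log\unknown|+1)|\nabla\mu|\unknown^{1/2}\bigr)\cdot\bigl(d^{1/2}|\Delta\mu|\unknown^{1/2}\bigr)$ and apply H\"older's inequality. This gives the bound
\begin{equation*}
\left(\int_\Omega \frac{|\nabla d|^2}{d}(|\log\unknown|+1)^2|\nabla\mu|^2\unknown\,dx\right)^{1/2}\left(\int_\Omega d(x)(\Delta\mu)^2\unknown\,dx\right)^{1/2}.
\end{equation*}
Then apply Young's inequality $ab\le \frac{1}{2\delta}a^2+\frac{\delta}{2}b^2$ with a parameter $\delta>0$ to be fixed below. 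In the first factor, pull out $\|\log\unknown\|_\infty+1$, which is finite because $\unknown$ is bounded and positive (Theorem \ref{thm:apriori_estimate}). Also pull out $\|\nabla d/\sqrt d\|_\infty^2=\|\sqrt d\,\nabla\log d\|_\infty^2$, using $\nabla d=d\nabla\log d$.

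The only point that is not a copy of Lemma \ref{lem:I5} is converting $(\Delta\mu)^2$ into $|\nabla^2\mu|^2$. By Cauchy--Schwarz on the diagonal entries, $(\Delta\mu)^2=(\sum_{i}\mu_{x_ix_i})^2\le n\sum_i\mu_{x_ix_i}^2\le n|\nabla^2\mu|^2$. Consequently $\frac{\delta}{2}\int_\Omega d(\Delta\mu)^2\unknown\,dx\le \frac{\delta n}{2}\int_\Omega d|\nabla^2\mu|^2\unknown\,dx$.

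Taking $\delta=\delta_2$ gives exactly the constants in \eqref{eq:2.Est_Grad_Diffusion_2}. No boundary terms arise and no integration by parts is needed, so I do not expect a genuine obstacle. The only thing to track is that the constant $n$ multiplies the Hessian term and not the $\D[\unknown]$ term.
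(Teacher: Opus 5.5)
Your proposal is correct and follows the paper's proof essentially step for step. The paper also uses the triangle inequality, then H\"older with weights $d^{-1}$ and $d$, then Young's inequality with parameter $\delta_2$, and finally the bound $(\Delta\mu)^2\le n|\nabla^2\mu|^2$ to produce the factor $n$ on the Hessian term.
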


\begin{proof}
    From the triangle inequality, we have
    \begin{equation*}
        \begin{split}
            &\quad\left|\int_\Omega  (\log\unknown+1)(\nabla d(x)\cdot\nabla\mu)\Delta\mu \unknown\, dx\right|
            \\
            &\le
            \int_\Omega (|\log \unknown|+1) |\nabla  d(x)||\nabla\mu||\Delta\mu|\unknown\, dx .
        \end{split}
    \end{equation*}
    Similarly, in the proof of Lemma \ref{lem:I5},  it follows from H\"older's and Young's inequality that
    \begin{equation*}
        \begin{split}
            &\quad
            \int_\Omega (|\log \unknown|+1) |\nabla d(x)||\nabla\mu||\Delta\mu|\unknown\, dx 
            \\
            &\le 
            \left(\int_\Omega \frac{1}{d(x)}(|\log\unknown|+1)^2|\nabla d(x)|^2|\nabla\mu|^2\unknown\, dx\right)^{\frac{1}{2}}
            \\
            &\quad\times\left(\int_\Omega d(x)(\Delta\mu)^2\unknown \, dx\right)^{\frac{1}{2}} 
            \\
            &\le 
            \frac{1}{2\delta_2}(\|\log \unknown\|_\infty+1)^2\left\|\sqrt{d}\nabla \log d\right\|_\infty^2\int_\Omega |\nabla\mu|^2\unknown\, dx
            \\
            &\quad
            +\frac{\delta_2}{2}\int_\Omega d(x)(\Delta\mu)^2\unknown \, dx.
        \end{split}
    \end{equation*}
    Since $(\Delta\mu)^2\le n|\nabla^2\mu|^2$,
    \begin{equation*}
        \int_\Omega d(x)(\Delta\mu)^2\unknown \, dx
            \le n\int_\Omega d(x)|\nabla^2\mu|^2\unknown \, dx.
    \end{equation*}
    Therefore,  \eqref{eq:2.Est_Grad_Diffusion_2} follows from summarizing the above estimates.
\end{proof}

From \eqref{eq:2.Est_Grad_Diffusion_2},  we obtain
\begin{equation}
    \label{eq:2.Estimate_I6}
    2|I_5|
    \leq \frac{1}{\delta_2}   
    (\|\log\unknown\|_\infty+1)^2\left\|\sqrt{d}\nabla\log d\right\|_\infty^2\D[\unk](t)
    +
    n\delta_2I_2.
\end{equation}
To proceed with the estimate $I_6$,  we first substitute $\nabla\unk$ by $\nabla\mu$. In the next lemma,  we use the relation \eqref{mu}.

\begin{lem}
    \label{lem:I7}
    Let $\unknown$ be a classical solution of \eqref{eq:Nonlinear-Fokker-Planck} on $\overline{\Omega}\times [0, \infty)$. Then
    \begin{equation}
    \label{eq:2.Compute_Grad_Diffusion_3}
        \begin{split}
            &\quad
           \int_\Omega \log\unknown(\nabla d(x)\cdot\nabla\mu)(\nabla\log \unknown\cdot\nabla\mu)\unknown\,dx
            \\
            &=
            \int_\Omega\log \unknown(\nabla \log d(x)\cdot \nabla\mu)|\nabla\mu|^2\unknown\, dx
            \\
            &\quad
            -\int_\Omega d(x)(\nabla \log d(x)\cdot \nabla\mu)^2(\log\unknown)^2\unknown\, dx
            \\
            &\quad
            -\int_\Omega \log \unknown(\nabla\log  d(x)\cdot\nabla\mu)(\nabla\phi(x)\cdot\nabla\mu)\unknown\, dx.
        \end{split}
    \end{equation}
\end{lem}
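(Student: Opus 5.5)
This identity is purely pointwise, so I will prove it at the level of integrands and then integrate over $\Omega$; no integration by parts or boundary term is needed. The starting point is the definition \eqref{mu}, differentiated in $x$:
\begin{equation}
    \nabla\mu = d(x)\nabla\log\unknown + \log\unknown\,\nabla d(x) + \nabla\phi(x).
\end{equation}
Since $d\ge d_{\min}>0$, I can solve this for $\nabla\log\unknown$:
\begin{equation}
    \nabla\log\unknown
    =
    \frac{1}{d(x)}\bigl(\nabla\mu - \log\unknown\,\nabla d(x) - \nabla\phi(x)\bigr).
\end{equation}

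Next I substitute this into the factor $(\nabla\log\unknown\cdot\nabla\mu)$ of the left-hand integrand. This gives
\begin{equation}
    \nabla\log\unknown\cdot\nabla\mu
    =
    \frac{1}{d(x)}|\nabla\mu|^2
    - \log\unknown\,(\nabla\log d(x)\cdot\nabla\mu)
    - \frac{1}{d(x)}(\nabla\phi(x)\cdot\nabla\mu),
\end{equation}
where I used $\nabla d/d=\nabla\log d$.

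Then I multiply by $\log\unknown\,(\nabla d(x)\cdot\nabla\mu)\unknown$ and write $\nabla d=d\,\nabla\log d$, which produces three terms. The first is $\log\unknown\,(\nabla\log d\cdot\nabla\mu)|\nabla\mu|^2\unknown$, since the factor $d$ cancels the $1/d$. The second is $-d\,(\log\unknown)^2(\nabla\log d\cdot\nabla\mu)^2\unknown$, since one factor $d$ survives. The third is $-\log\unknown\,(\nabla\log d\cdot\nabla\mu)(\nabla\phi\cdot\nabla\mu)\unknown$. Integrating over $\Omega$ then gives \eqref{eq:2.Compute_Grad_Diffusion_3}.

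There is no real obstacle here. The only care needed is bookkeeping of the factor $d$, namely tracking exactly where $d$ cancels and where $\nabla d$ is rewritten as $d\nabla\log d$, so that the coefficient of the middle term comes out as $d(x)$ and the other two terms carry none. Positivity of $\unknown$, needed for $\log\unknown$ to make sense, is guaranteed by Theorem~\ref{thm:apriori_estimate}.
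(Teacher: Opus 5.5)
Your proof is correct and follows essentially the same route as the paper. Both arguments make a pointwise substitution using $\nabla\mu = d(x)\nabla\log\unknown + \log\unknown\,\nabla d(x) + \nabla\phi(x)$ and then integrate, with no integration by parts; the only cosmetic difference is that the paper first writes $\nabla d = d\,\nabla\log d$ and substitutes for $d(x)\nabla\log\unknown$ directly, so it never divides by $d$.
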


\begin{proof}
    Taking the gradient of both side of \eqref{mu}, we have
    \begin{equation}
        \nabla\mu
        =
         d(x)\nabla\log\unknown+ \log\unknown\nabla d(x)+\nabla\phi(x).
    \end{equation}
    Thus,  the integrand of $I_6$ turns into
    \begin{equation}
        \begin{split}
        &\quad
       \log \unknown(\nabla  d(x)\cdot \nabla\mu)(\nabla \log\unknown\cdot \nabla\mu)\unknown
        \\
        &=\log \unknown(\nabla \log d(x)\cdot \nabla\mu)(d(x)\nabla \log\unknown\cdot \nabla\mu)\unknown
        \\
        &=
        \log\unknown
        (\nabla\log d(x)\cdot \nabla\mu)
        \left(
        \left(
        \nabla\mu
        -
        d(x)\log\unknown\nabla \log d(x)
        -
        \nabla\phi(x)
        \right)
        \cdot \nabla\mu\right)
        \unknown
        \end{split}
     \end{equation}
    Taking the integration on $\Omega$ on both sides, we obtain \eqref{eq:2.Compute_Grad_Diffusion_3}.
\end{proof}

Note that the second term of the right-hand side of \eqref{eq:2.Compute_Grad_Diffusion_3} is non-positive.  we have from \eqref{eq:2.Compute_Grad_Diffusion_3} that
\begin{equation}
    \label{eq:2.estimate_I7}
    \begin{split}
        2 I_6
        &\leq
        2\int_\Omega \log\unknown (\nabla \log d(x)\cdot \nabla\mu)|\nabla\mu|^2\unknown\, dx
        \\
        &\quad
        -2\int_\Omega \log\unknown(\nabla\log d(x)\cdot\nabla\mu)(\nabla\phi(x)\cdot\nabla\mu)\unknown\, dx
        \\
        &\le
        2\|\nabla \log d\|_\infty\|\log \unknown\|_\infty\int_\Omega |\nabla\mu|^3\unknown\, dx
        \\
        &\quad
        +2\|\nabla\log d\|_\infty \|\nabla\phi\|_\infty\|\log\unknown\|_\infty\D[\unk](t).    
    \end{split}
\end{equation}

We need to handle a cubic nonlinearity in the right-hand side of \eqref{eq:2.estimate_I7}. 
Compared to the problem in \cites{MR4506846,arXiv:epshteyn2024longtimeasymptoticbehaviornonlinear}, we subject the Neumann boundary condition to $\mu$. Thus, we cannot use the Sobolev inequality of homogeneous type
\begin{equation*}
    \int_\Omega |\bu|^{p^*}\,dx \le C\int_\Omega |\nabla\bu|^2\,dx,\quad \frac{1}{p^*}=\frac{1}{2}-\frac{1}{n},
\end{equation*}
Instead, we use the following Sobolev-Poincar\'e type inequality. 

\begin{prop}
\label{Sobolev}
    Let $\unknown$ be a strictly positive bounded classical solution of 
\eqref{eq:Nonlinear-Fokker-Planck} in $\overline{\Omega}\times [0, \infty)$. Then,  there is a suitable positive constant $\Cl{const:sobolevtype}>0$ depending only on $n$, $\Cr{const:f>c}$, $\Cr{const:f<c}$ and $\Omega$, such that for any vector field $\bv\in C^1(\Omega)$, 
    \begin{equation}
    \label{Sobolev-Poincare}
        \left(\int_\Omega |\bv-\overline{\bv}|^{p^*}\unknown\, dx\right)^{\frac{1}{p^*}}\le \Cr{const:sobolevtype}\left(\int_\Omega |\nabla\bv|^2\unknown\, dx\right)^{\frac{1}{2}}, 
    \end{equation}
    where $\overline{\bv}$ is the integral average of $\bv$ and the $p^*$ is an exponent satisfying $\frac{1}{p^*}=\frac{1}{2}-\frac{1}{n}$ for $n\ge3$ and arbitrary $2 \le p^* < \infty$ for $n=1, 2$.
\end{prop}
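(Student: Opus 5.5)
The plan is to reduce the weighted inequality to the classical unweighted Sobolev--Poincar\'e inequality, using the two-sided bound $\Cr{const:f>c}\le\unknown\le\Cr{const:f<c}$ from Theorem~\ref{thm:apriori_estimate}.

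Since $\unknown$ is bounded above and below by positive constants, the weighted norms are equivalent to the unweighted ones. On the left-hand side,
\begin{equation*}
\left(\int_\Omega |\bv-\overline{\bv}|^{p^*}\unknown\,dx\right)^{1/p^*}\le \Cr{const:f<c}^{1/p^*}\|\bv-\overline{\bv}\|_{L^{p^*}(\Omega)}.
\end{equation*}
On the right-hand side,
\begin{equation*}
\|\nabla\bv\|_{L^2(\Omega)}\le \Cr{const:f>c}^{-1/2}\left(\int_\Omega|\nabla\bv|^2\unknown\,dx\right)^{1/2}.
\end{equation*}
So it suffices to prove the unweighted estimate $\|\bv-\overline{\bv}\|_{L^{p^*}}\le C(n,\Omega,p^*)\|\nabla\bv\|_{L^2}$ componentwise. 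Summing the components $v_i$ costs at most a factor depending on $n$.

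For the unweighted estimate, I use that $\Omega$ is a bounded smooth (in fact convex) domain, so it is an extension domain. First, the Poincar\'e--Wirtinger inequality gives $\|v-\overline v\|_{L^2}\le C_P\|\nabla v\|_{L^2}$. This can be proved by the standard contradiction–compactness argument via Rellich's theorem, or directly by the convex-domain estimate of Payne--Weinberger type. Second, the Sobolev embedding $W^{1,2}(\Omega)\hookrightarrow L^{p^*}(\Omega)$ holds, with $p^*=2n/(n-2)$ for $n\ge3$ and any finite $p^*\ge2$ for $n=1,2$. For $n=1$ one even has the embedding into $L^\infty$, which gives every finite $p^*$ on a bounded domain. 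Applying the embedding to $v-\overline v$ gives
\begin{equation*}
\|v-\overline v\|_{L^{p^*}}\le C_S\left(\|v-\overline v\|_{L^2}+\|\nabla v\|_{L^2}\right)\le C_S(C_P+1)\|\nabla v\|_{L^2}.
\end{equation*}
This requires $\bv\in W^{1,2}$. If $\nabla\bv\notin L^2$ the right-hand side of the claim is infinite and there is nothing to prove. Otherwise I approximate, or simply apply the result to the vector fields actually used, namely $\nabla\mu$, which are smooth up to the boundary.

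Combining the two steps yields the claim with $\Cr{const:sobolevtype}=C(n)\,C_S(C_P+1)\,\Cr{const:f<c}^{1/p^*}\Cr{const:f>c}^{-1/2}$. This constant depends only on $n$, $\Omega$, $p^*$ and the bounds from Theorem~\ref{thm:apriori_estimate}.

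One subtlety concerns $\overline{\bv}$. If it is meant as the Lebesgue average, the argument above is complete. If it is meant as the $\unknown$-weighted average, then $\|v-\overline v^{\unknown}\|_{L^{p^*}}\le \|v-\overline v\|_{L^{p^*}}+|\overline v-\overline v^{\unknown}|\,|\Omega|^{1/p^*}$. Here
\begin{equation*}
|\overline v-\overline v^{\unknown}|=\frac{1}{\int_\Omega\unknown\,dx}\left|\int_\Omega (v-\overline v)\unknown\,dx\right|\le C\|v-\overline v\|_{L^2},
\end{equation*}
which is again controlled by Poincar\'e. So either convention works.

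The main obstacle is only bookkeeping: the embedding constants must be independent of $\bv$ and of time. This holds because they depend only on $\Omega$, and the weights enter only through the time-independent constants $\Cr{const:f>c},\Cr{const:f<c}$.
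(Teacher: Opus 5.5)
Your proof is correct and is essentially the paper's argument: the paper likewise bounds the weighted left-hand side by $\Cr{const:f<c}^{1/p^*}\|\bv-\overline{\bv}\|_{L^{p^*}(\Omega)}$, applies the unweighted Sobolev--Poincar\'e inequality (which it simply cites, whereas you derive it from Poincar\'e--Wirtinger plus the embedding $W^{1,2}\hookrightarrow L^{p^*}$), and returns to the weighted Dirichlet integral at the cost of $\Cr{const:f>c}^{-1/2}$. Your remark on the $\unknown$-weighted average is harmless but unnecessary, since the paper means the Lebesgue average, as its later bound $|\overline{\bv}|\le|\Omega|^{-1}\int_\Omega|\bv|\,dx$ confirms.
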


\begin{proof}
    Since $p^*$ is the Sobolev exponent,  it follows from the Sobolev-Poincar\'e inequality (\cite{MR1814364}*{p.174}, \cite{MR1817225}*{Theorem 4.3}) that
    \begin{equation*}
        \left(\int_\Omega |\bv-\overline{\bv}|^{p^*}\, dx\right)^{\frac{1}{p^*}}\le \Cl{const:Sobolev}\left(\int_\Omega |\nabla\bv|^2\, dx\right)^{\frac{1}{2}}
    \end{equation*}
    holds for any vector field $\bv \in C^1(\Omega)$. By the definition of $\Cr{const:f>c}$,  $\Cr{const:f<c}$,  we obtain that
    \begin{equation*}
        \begin{split}
            \left(\int_\Omega |\bv-\overline{\bv}|^{p^*}\unknown\, dx\right)^{\frac{1}{p^*}}
            &\le
            \Cr{const:f<c}^{\frac{1}{p^*}}\left(\int_\Omega |\bv-\overline{\bv}|^{p^*}\, dx\right)^{\frac{1}{p^*}}\\
            &\le 
            \Cr{const:f<c}^{\frac{1}{p^*}}\Cr{const:Sobolev}\left(\int_\Omega |\nabla\bv|^2\, dx\right)^{\frac{1}{2}}
            \\
            &\le 
            \frac{\Cr{const:f<c}^{\frac{1}{p^*}}\Cr{const:Sobolev}}{\Cr{const:f>c}^{\frac{1}{2}}}\left(\int_\Omega |\nabla\bv|^2\unknown\, dx\right)^{\frac{1}{2}}.
        \end{split}
    \end{equation*}
\end{proof}

Next, we prove an interpolation inequality from the Sobolev-Poincar\'e type inequality \eqref{Sobolev-Poincare}.

\begin{prop}
\label{prop:|v|^3}
    Let $n=1, 2, 3$. Let $\unknown$ be a bounded, strictly positive solution of \eqref{eq:Nonlinear-Fokker-Planck} on $\overline{\Omega}\times [0, \infty)$. Then,  there are constants $\Cl{const:|v|^3->|nablav|^2}$,  $\Cl{const:|v|^3-|v|^2, 1}$,  and $\Cl{const:|v|^3-|v|^2, 2}>0$ such that for any $\bv\in C^1(\Omega)$, 
    \begin{equation}
        \label{eq:2.Interpolation}
            \int_\Omega |\bv|^3\unknown\, dx
	    \le
	    \Cr{const:|v|^3->|nablav|^2}
	    \int_\Omega |\nabla\bv|^2\unknown\, dx
	    +
	    \Cr{const:|v|^3-|v|^2, 1}
	    \left(\int_\Omega |\bv|^2\unknown\, dx\right)^3
            +
	    \Cr{const:|v|^3-|v|^2, 2}
	    \left(\int_\Omega |\bv|^2\unknown\, dx\right)^{\frac{3}{2}}.
    \end{equation}
\end{prop}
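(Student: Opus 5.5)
We split $\bv=(\bv-\overline{\bv})+\overline{\bv}$ and treat the oscillation and the mean separately. By convexity of $s\mapsto s^3$,
\begin{equation*}
\int_\Omega |\bv|^3\unknown\,dx\le 4\int_\Omega|\bv-\overline{\bv}|^3\unknown\,dx+4|\overline{\bv}|^3\int_\Omega\unknown\,dx .
\end{equation*}
The mass term equals $1$ by \eqref{eq:f_pdf}. Since $\unknown\ge\Cr{const:f>c}$ by Theorem~\ref{thm:apriori_estimate}, Cauchy--Schwarz gives
\begin{equation*}
|\overline{\bv}|\le |\Omega|^{-1/2}\Cr{const:f>c}^{-1/2}\left(\int_\Omega|\bv|^2\unknown\,dx\right)^{1/2}.
\end{equation*}
Hence the mean contributes a multiple of $\left(\int_\Omega|\bv|^2\unknown\,dx\right)^{3/2}$, which is the last term of \eqref{eq:2.Interpolation}.

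For the oscillation part we interpolate $L^3$ between $L^2$ and $L^{p^*}$ in the weighted measure $\unknown\,dx$. For $n=3$ take $p^*=6$; for $n=1,2$ also take $p^*=6$, which Proposition~\ref{Sobolev} allows. Writing $\bw=\bv-\overline{\bv}$, Hölder with $\frac13=\frac{\theta}{2}+\frac{1-\theta}{6}$, i.e.\ $\theta=\frac12$, gives
\begin{equation*}
\int_\Omega|\bw|^3\unknown\,dx\le\left(\int_\Omega|\bw|^2\unknown\,dx\right)^{3/4}\left(\int_\Omega|\bw|^6\unknown\,dx\right)^{1/4}.
\end{equation*}
By Proposition~\ref{Sobolev}, the last factor is at most $\Cr{const:sobolevtype}^{3/2}\left(\int_\Omega|\nabla\bv|^2\unknown\,dx\right)^{3/4}$. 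Young's inequality with exponents $\frac43$ and $4$ then yields
\begin{equation*}
\int_\Omega|\bw|^3\unknown\,dx\le C\int_\Omega|\nabla\bv|^2\unknown\,dx+C\left(\int_\Omega|\bw|^2\unknown\,dx\right)^{3}.
\end{equation*}

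It remains to bound $\int_\Omega|\bw|^2\unknown\,dx$ by $\int_\Omega|\bv|^2\unknown\,dx$. We have
\begin{equation*}
\int_\Omega|\bw|^2\unknown\,dx\le 2\int_\Omega|\bv|^2\unknown\,dx+2|\overline{\bv}|^2,
\end{equation*}
and $|\overline{\bv}|^2$ is again controlled by $\int_\Omega|\bv|^2\unknown\,dx$ through the estimate above. So the cubic term becomes $C\left(\int_\Omega|\bv|^2\unknown\,dx\right)^3$. Collecting constants, all depending on $n$, $\Omega$, $\Cr{const:f>c}$, $\Cr{const:f<c}$ and $\Cr{const:sobolevtype}$, gives \eqref{eq:2.Interpolation}.

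The one point needing care is the choice of exponents. With $p^*=6$ the Hölder split gives $\theta=\frac12$, so the gradient enters with power $\frac34<1$. Young's inequality can then absorb it into the linear term $\int_\Omega|\nabla\bv|^2\unknown\,dx$, and the leftover is exactly cubic in the $L^2$ quantity. This is where $n\le3$ is used: it makes $p^*\ge 6$ available. For $n\ge4$ the gradient power exceeds $1$ and the argument fails. Everything else is routine.
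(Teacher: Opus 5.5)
Your proof is correct and is essentially the paper's argument. Both use the same $L^2$--$L^6$ H\"older interpolation (exponents $\tfrac43$ and $4$), the weighted Sobolev--Poincar\'e inequality of Proposition~\ref{Sobolev} with $p^*=6$, Young's inequality yielding $\int_\Omega|\nabla\bv|^2\unknown\,dx$ plus the cube of $\int_\Omega|\bv|^2\unknown\,dx$, and control of the mean through the lower bound on $\unknown$ and unit mass. The only difference is the order of steps: you subtract $\overline{\bv}$ before applying H\"older, which costs you the extra step of bounding $\int_\Omega|\bv-\overline{\bv}|^2\unknown\,dx$ by $\int_\Omega|\bv|^2\unknown\,dx$; the paper applies H\"older to $|\bv|^{3/2}|\bv|^{3/2}$ first and subtracts the mean only inside the $L^6$ factor, so its $L^2$ factor is already in terms of $\bv$.
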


\begin{proof}
    This assertion was proved in \cite{araki2025longtimebehaviorfreeenergy}.
    Here we demonstrate it for the sake of completeness.
    Let $a, b>0$ such that $a+b=1$,  and let $p>1$ satisfying $3ap\geq1$. Note that $|\bv|^{3}=|\bv|^{3a}|\bv|^{3b}$, hence by H\"{o}lder's and the convex inequality, 
    \begin{equation}
    \label{|v|^3f'sholder}
        \begin{split}
            \int_\Omega |\bv|^3\unknown\, dx&\le \left(\int_\Omega |\bv|^{3ap}\unknown\, dx \right)^{\frac{1}{p}}\left(\int_\Omega |\bv|^{3bp'}\unknown\, dx\right)^{\frac{1}{p'}} \\
            &=\left(\int_\Omega |\bv-\overline{\bv}+\overline{\bv}|^{3ap}\unknown\, dx \right)^{\frac{1}{p}}\left(\int_\Omega |\bv|^{3bp'}\unknown\, dx\right)^{\frac{1}{p'}} \\
            &\le 2^{\frac{3ap-1}{p}}\Biggl(\left(\int_\Omega |\bv-\overline{\bv}|^{3ap}\unknown\, dx \right)^{\frac{1}{p}}
            \\
            &\quad
            +\left(\int_\Omega |\overline{\bv}|^{3ap}\unknown\, dx \right)^{\frac{1}{p}}\Biggr)\left(\int_\Omega |\bv|^{3bp'}\unknown\, dx\right)^{\frac{1}{p'}}, 
        \end{split}
    \end{equation}
    where $p'$ is the H\"{o}lder dual index of $p$. Next, we set $3ap=p^*\geq1$. By Proposition \ref{Sobolev}, 
    \begin{equation*}
        \left(\int_\Omega |\bv-\overline{\bv}|^{3ap}\unknown\, dx \right)^{\frac{1}{p}}
        \le 
        \Cr{const:sobolevtype}^{\frac{p^*}{p}}\left(\int_\Omega |\nabla\bv|^{2}\unknown\, dx \right)^{\frac{p^*}{2p}}.
    \end{equation*}
    We take $3bp'=2$ and $\frac{p^*}{2p}<1$. Then,  by using Young's inequality, 
    \begin{equation*}
    \begin{split}
        \left(\int_\Omega |\nabla\bv|^{2}\unknown\, dx \right)^{\frac{p^*}{2p}}\left(\int_\Omega |\bv|^{2}\unknown\, dx\right)^{\frac{1}{p'}}
            &
            \le
            \frac{p^*}{2p}\int_\Omega |\nabla\bv|^2\unknown\, dx
            \\
            &\quad
            +\left(1-\frac{p^*}{2p}\right)\left(\int_\Omega |\bv|^2\unknown\, dx\right)^{\frac{1}{p'}\left(1-\frac{p^*}{2p}\right)^{-1}}.
    \end{split}
    \end{equation*}
    Note from \eqref{eq:apriori} that $\Cr{const:f>c}$ is the minimum of $\unknown$ on $\overline{\Omega} \times [0, \infty)$. Then,  from H\"{o}lder's inequality and \eqref{eq:f_pdf} that
    \begin{equation*}
        \begin{split}
            \left(\int_\Omega |\overline{\bv}|^{p^*}\unknown\, dx \right)^{\frac{1}{p}}
            =|\overline{\bv}|^{\frac{p^*}{p}}
            &\le
            \left(\frac{1}{|\Omega|}\int_\Omega |\bv|\, dx\right)^{\frac{p^*}{p}}
            \\
            &\le
            \left(\frac{1}{|\Omega|\Cr{const:f>c}}\int_\Omega |\bv|\unknown\, dx\right)^{\frac{p^*}{p}}
            \\
            &\le 
            \left(\frac{1}{|\Omega|\Cr{const:f>c}}\right)^{\frac{p^*}{p}}\left(\int_\Omega |\bv|^2\unknown\, dx\right)^{\frac{p^*}{2p}}.
        \end{split}
    \end{equation*}
    Therefore subsituting the above inequality to \eqref{|v|^3f'sholder},  we obtain
    \begin{equation*}
        \begin{split}
            \int_\Omega |\bv|^3\unknown\, dx
            &
            \le 2^{\frac{3ap-1}{p}}\Cr{const:sobolevtype}^{\frac{p^*}{p}}\frac{p^*}{2p}\int_\Omega |\nabla\bv|^2\unknown\, dx
            \\
            &\quad
            +2^{\frac{3ap-1}{p}}
            \Cr{const:sobolevtype}^{\frac{p^*}{p}}\left(1-\frac{p^*}{2p}\right)\left(\int_\Omega |\bv|^2\unknown\, dx\right)^{\frac{1}{p'}\left(1-\frac{p^*}{2p}\right)^{-1}} 
            \\
            &\quad
            +2^{\frac{3ap-1}{p}}\left(\frac{1}{|\Omega|\Cr{const:f>c}}\right)^{\frac{p^*}{p}}\left(\int_\Omega |\bv|^2\unknown\, dx\right)^{\frac{p^*}{2p}+\frac{1}{p'}}.
        \end{split}
    \end{equation*}
    Next, we check the constraints' condition. If $n\ge3$,  note that $a+b=1$ and $p'$ is the H\"{o}lder dual index of $p$,  $3ap=p^*$ and $p^*$ is the Sobolev exponent. Then,  we get
    \begin{equation*}
            1=\frac{1}{p}+\frac{1}{p'}=\frac{3a}{p^*}+\frac{3b}{2}=\frac{3}{2}-\frac{3a}{n}.
    \end{equation*}
    Thus,  we obtain $a=\frac{n}{6}$. Combining $\frac{p^*}{2p}<1$ and $3ap=p^*$,  we deduce $a<\frac{2}{3}$ hence $n<4$,  which means $n=3$. If $n=1, 2$,  we put $p^*=6$. Then we deduce from $3ap=6$,  $3bp'=2$ that
    \begin{equation}
        1=\frac{1}{p}+\frac{1}{p'}=\frac{a}{2}+\frac{3}{2}b=\frac{1}{2}+b, 
    \end{equation}
    thus $a=b=\frac{1}{2}$,  $p=4$,  $p'=\frac{4}{3}$, $p^*=6$ and $\frac{1}{p'}(1-\frac{p^*}{2p})^{-1}=3$.
    Using the above results,  we obtain \eqref{eq:2.Interpolation},  where
    \begin{equation*}
            \Cr{const:|v|^3->|nablav|^2}:=2^{-\frac{3}{4}}3\Cr{const:sobolevtype}^{\frac{3}{2}}, \quad
            \Cr{const:|v|^3-|v|^2, 1}:=2^{-\frac{3}{4}}\Cr{const:sobolevtype}^{\frac{3}{2}},  \quad
            \Cr{const:|v|^3-|v|^2, 2}:=2^{\frac{5}{4}}\left(\frac{1}{|\Omega|\Cr{const:f>c}}\right)^{\frac{3}{2}}.
    \end{equation*}
\end{proof}

\begin{remark}
    In the proof of Proposition \ref{Sobolev} and \ref{prop:|v|^3}, we essentially used \eqref{eq:apriori}, the positive lower bounds and the upper bounds of the solution $\unknown$. Thus, the assertions of Proposition  \ref{Sobolev} and \ref{prop:|v|^3} can be extended to arbitrary integrable function $\unknown\in L^1(\Omega)$ satisfying \eqref{eq:apriori}.
\end{remark}

Using Lemma \ref{lem:I5},  \ref{lem:I6},  \ref{lem:I7} and Proposition \ref{prop:|v|^3} to \eqref{second-derivative-F},  we obtain the following estimate:

\begin{lem}
 \label{D>0} Let $n=1, 2, 3$.  Let $\unknown$ be a bounded,  positive
 classical solution of \eqref{eq:Nonlinear-Fokker-Planck} on
 $\overline{\Omega}\times [0, \infty)$. Then,  there is a small enough number $\Cr{const:nabla_log_d<c}>0$ such that if $\|\nabla \log d\|_\infty <\Cr{const:nabla_log_d<c}$, then there exist positive constants $\Cl{const:|nablamu|^2f^3/2}$, 
 $\Cl{const:|nablamu|^2f^3}>0$
 such that for $t>0$,
 \begin{equation}
  \label{eq:2.Second_time_derivative_F_Final}
  \frac{d}{dt}\D[\unknown](t)
    \le
    -\lambda
    \D[\unk](t)
    +
    \Cr{const:|nablamu|^2f^3/2}
    \left(\D[\unk](t)\right)^3
    +
    \Cr{const:|nablamu|^2f^3}
    \left(\D[\unk](t)\right)^{\frac{3}{2}}.
 \end{equation}
\end{lem}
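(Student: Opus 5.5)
We start from \eqref{second-derivative-F}, $\frac{d}{dt}\D[\unk](t)\le -2I_1-2I_2+2I_4+2I_5+2I_6$, and bound each term separately. Since $\nabla^2\phi\ge\lambda I$, we have $-2I_1\le -2\lambda\D[\unk](t)$. This gives us a margin of $\lambda\D[\unk](t)$ to absorb all the $\D[\unk](t)$-linear error terms. Theorem~\ref{thm:apriori_estimate} bounds $\|\log\unk\|_\infty$ by a constant $M$ depending only on $d,\phi,\unk_0$. It also gives $\|\sqrt d\nabla\log d\|_\infty\le d_{\max}^{1/2}\|\nabla\log d\|_\infty$.

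\textbf{Bounds on $I_4$ and $I_5$.} From \eqref{eq:2.Estimate_I5} and \eqref{eq:2.Estimate_I6}, choosing $\delta_1=\delta_2=1/(2(n+1))$ so that $\delta_1+n\delta_2\le 1/2$, we get
\begin{equation*}
 2|I_4|+2|I_5|\le C_1\|\nabla\log d\|_\infty^2(M+1)^2\D[\unk](t)+\tfrac12 I_2 ,
\end{equation*}
with $C_1$ depending on $n$ and $d_{\max}$.

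\textbf{Bound on $I_6$.} From \eqref{eq:2.estimate_I7},
\begin{equation*}
 2I_6\le 2\|\nabla\log d\|_\infty M\int_\Omega|\nabla\mu|^3\unk\,dx+2\|\nabla\log d\|_\infty\|\nabla\phi\|_\infty M\,\D[\unk](t).
\end{equation*}
For the cubic term we apply Proposition~\ref{prop:|v|^3} with $\bv=\nabla\mu$, which is $C^1$ since $\unk$ is classical. This gives
\begin{equation*}
\int_\Omega|\nabla\mu|^3\unk\,dx\le \Cr{const:|v|^3->|nablav|^2}\int_\Omega|\nabla^2\mu|^2\unk\,dx+\Cr{const:|v|^3-|v|^2, 1}\D[\unk]^3+\Cr{const:|v|^3-|v|^2, 2}\D[\unk]^{3/2},
\end{equation*}
and we then use $\int|\nabla^2\mu|^2\unk\le d_{\min}^{-1}I_2$. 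The Hessian contribution therefore becomes $2M\Cr{const:|v|^3->|nablav|^2}d_{\min}^{-1}\|\nabla\log d\|_\infty I_2$.

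\textbf{Choice of $\Cr{const:nabla_log_d<c}$.} We take it small enough that this coefficient is at most $\frac32$, so that together with the $\frac12 I_2$ above the total $I_2$ coefficient is at most $2$ and cancels against $-2I_2$. We also require
\begin{equation*}
C_1\Cr{const:nabla_log_d<c}^2(M+1)^2+2\Cr{const:nabla_log_d<c}\|\nabla\phi\|_\infty M\le\lambda,
\end{equation*}
so that the linear terms, combined with $-2\lambda\D$, leave $-\lambda\D$. The remaining terms are $2\|\nabla\log d\|_\infty M\Cr{const:|v|^3-|v|^2, 1}\D^3$ and the analogous $\D^{3/2}$ term, which define $\Cr{const:|nablamu|^2f^3/2}$ and $\Cr{const:|nablamu|^2f^3}$. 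If $I_2$ has a leftover negative part we simply discard it, since $I_2\ge0$.

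\textbf{Main obstacle.} The delicate point is making the smallness threshold uniform. The constants in Proposition~\ref{prop:|v|^3} and the bound $M$ depend on $\Cr{const:f>c},\Cr{const:f<c}$, which in turn depend on $d$. We need to check that these stay bounded as $\|\nabla\log d\|_\infty\to0$ with $d_{\min},d_{\max}$ fixed. This holds by Remark~\ref{remark:c_1<unknown<c_2}, since the explicit formulas involve only $d_{\min},d_{\max},\phi,\unk_0$, and it is what makes the choice of $\Cr{const:nabla_log_d<c}$ consistent.
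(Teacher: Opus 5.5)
Your proposal is correct and follows the paper's proof essentially step by step: start from \eqref{second-derivative-F}, insert \eqref{eq:2.Estimate_I5}, \eqref{eq:2.Estimate_I6} and \eqref{eq:2.estimate_I7}, apply Proposition~\ref{prop:|v|^3} to $\bv=\nabla\mu$ with $\int_\Omega|\nabla^2\mu|^2\unk\,dx\le d_{\min}^{-1}I_2$, use $I_1\ge\lambda\D[\unk](t)$, and shrink $\|\nabla\log d\|_\infty$. The only difference is bookkeeping: the paper takes $\delta_1=1/2$, $\delta_2=1/(2n)$ and absorbs the Hessian part of the cubic term into the remaining $I_2$ with coefficient at most $1$, whereas you split the available $2I_2$ as $\tfrac12+\tfrac32$. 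You also check explicitly that the a priori bounds stay uniform once $d_{\min},d_{\max}$ are fixed, which the paper only asserts; this is better justified directly from $\log\unk\ge(\min_{\overline\Omega}\mu(\cdot,0)-\max_{\overline\Omega}\phi)/d(x)$ together with $|d\log\unk_0|\le d_{\max}\|\log\unk_0\|_\infty$ than by citing Remark~\ref{remark:c_1<unknown<c_2}, whose second formula has $\max$ and $\min$ interchanged and is not a valid lower bound as written.
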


\begin{proof}
Plugging \eqref{eq:2.Estimate_I5},  \eqref{eq:2.Estimate_I6},  and \eqref{eq:2.estimate_I7} into \eqref{second-derivative-F}, we can estimate the time derivative of $\D[\unknown](t)$ as
\begin{equation}
\label{Second-Order-Derivative_of_F}
    \begin{split}
        \frac{d}{dt}\D[\unknown](t)
        &\le
        -2I_1
        -(2-\delta_1-n\delta_2)I_2
        \\
        &\quad+
        \biggl(
        \left(\frac{1}{\delta_1}+\frac{1}{\delta_2}\right)(\|\log \unknown\|_\infty+1)^2\left\|\sqrt{d}\nabla\log d\right\|_\infty^2
        +
        \\
        &\qquad\qquad
        2\|\nabla\phi\|_\infty\|\log\unknown\|_\infty\|\nabla\log d\|_\infty
        \biggr)
        \D[\unk](t)
        \\
        &\quad+
        2\|\nabla \log d\|_\infty\|\log\unknown\|_\infty\int_\Omega |\nabla\mu|^3\unknown\, dx, 
    \end{split}
\end{equation}

From proposition \ref{prop:|v|^3} with $\bv=\nabla\mu$ and using \eqref{assume:d>c},  we have
\begin{equation}
\label{|nablamu|^3}
            \int_\Omega |\nabla \mu|^3\unknown\, dx
            \le 
            \frac{\Cr{const:|v|^3->|nablav|^2}}{\Cr{const:d>c}}I_2
            +
            \Cr{const:|v|^3-|v|^2, 1}\left(\D[\unk](t)\right)^3
            +\Cr{const:|v|^3-|v|^2, 2}\left(\D[\unk](t)\right)^{\frac{3}{2}}.
\end{equation}
Plugging \eqref{|nablamu|^3} into \eqref{Second-Order-Derivative_of_F},  we obtain
\begin{equation} 
    \label{eq:2.Second-Order-Derivative_of_F_2}
    \begin{split}
    \frac{d}{dt}\D[\unknown](t)
    &
    \le
    -2I_1
    -
    \left(2-\delta_1-n\delta_2-\frac{\Cr{const:f, nablaD, nablaphi1}}{\Cr{const:d>c}}\|\nabla\log d\|_\infty\right)I_2
    \\
    &\quad
    +\biggl(
    \left(\frac{1}{\delta_1}+\frac{1}{\delta_2}\right)(\|\log \unknown\|_\infty+1)^2\left\|\sqrt{d}\nabla\log d\right\|_\infty^2
    \\
    &\qquad\qquad
    +2\|\nabla\phi\|_\infty\|\log\unknown\|_\infty\|\nabla\log d\|_\infty
    \biggr)
        \D[\unk](t)
    \\
    &\quad
    +\Cr{const:f, nablaD, nablaphi2}\|\nabla\log d\|_\infty
    \left(\D[\unk](t)\right)^3
    \\
    &\quad
    +\Cr{const:f, nablaD, nablaphi3}\|\nabla\log d\|_\infty
    \left(\D[\unk](t)\right)^{\frac{3}{2}}, 
    \end{split}
    \end{equation}
where
\begin{equation}
    \begin{split}
        \Cl{const:f, nablaD, nablaphi1}&:=2\|\log\unknown\|_\infty\Cr{const:|v|^3->|nablav|^2},
        \\
        \Cl{const:f, nablaD, nablaphi2}
        &:=2\|\log\unknown\|_\infty\Cr{const:|v|^3-|v|^2, 1}, 
        \\
        \Cl{const:f, nablaD, nablaphi3}
        &:=2\|\log\unknown\|_\infty\Cr{const:|v|^3-|v|^2, 2}.
    \end{split}
\end{equation}
Since $\nabla^2 \phi(x)\ge \lambda I$ for $x\in\Omega$, $I_1$ can be estimated by
\begin{equation}
    \label{eq:2.Estimate.Hesse.potential}
    2I_1
    \ge
    2\lambda \int_\Omega |\nabla\mu|^2\unknown\, dx
    =
    2\lambda \D[\unk](t).
\end{equation}
Plugging \eqref{eq:2.Estimate.Hesse.potential} into
\eqref{eq:2.Second-Order-Derivative_of_F_2},  we obtain 
\begin{equation*}
    \begin{split}
        \frac{d}{dt}\D[\unknown](t)
        &\le
        -\biggl(2\lambda-
        \biggl(\left(\frac{1}{\delta_1}+\frac{1}{\delta_2}\right)(\|\log \unknown\|_\infty+1)^2\left\|\sqrt{d}\nabla\log d\right\|_\infty^2
        \\
        &\qquad\qquad
        +2\|\nabla\phi\|_\infty\|\log\unknown\|_\infty\|\nabla\log d\|_\infty
        \biggr)
        \biggr)\D[\unk](t)
        \\
        &\quad
        -(2-\delta_1-n\delta_2-\frac{\Cr{const:f, nablaD, nablaphi1}}{\Cr{const:d>c}}\|\nabla\log d\|_\infty)I_2
        \\
        &\quad+\Cr{const:f, nablaD, nablaphi2}\|\nabla\log d\|_\infty
    \left(\D[\unk](t)\right)^3
    \\
    &\quad
     +\Cr{const:f, nablaD, nablaphi3}\|\nabla\log d\|_\infty
    \left(\D[\unk](t)\right)^{\frac{3}{2}}.
    \end{split}
\end{equation*}
Let $\delta_1$, $\delta_2$ be small enough such that $\delta_1+n\delta_2=1$. For example, let $\delta_1=1/2$ and $\delta_2=1/2n$.
 By taking $\|\nabla \log d\|_\infty<\Cr{const:nabla_log_d<c}$ sufficiently small, we obtain 
 \begin{equation*}
    \begin{split}
        -\biggl(2\lambda-
        \biggl(
        \left(\frac{1}{\delta_1}+\frac{1}{\delta_2}\right)(\|\log \unknown\|_\infty+1)^2\left\|\sqrt{d}\nabla\log d\right\|_\infty^2
        \\
        +
        2\|\nabla\phi\|_\infty\|\log\unknown\|_\infty\|\nabla\log d\|_\infty
        \biggr)
        \biggr)
        &\le
        -\lambda, 
        \\
        1-\frac{\Cr{const:f, nablaD, nablaphi1}}{\Cr{const:d>c}}\|\nabla\log d\|_\infty
        &\ge
        0.
    \end{split}
 \end{equation*}
 Here, the constant $\Cr{const:nabla_log_d<c}$ depends on $\unknown_0$, $\nabla\phi$, $n$, $\Omega$, and $\Cr{const:d>c}$.
 Then, the time derivative of $\D[\unk](t)$ can be estimated as
 \begin{equation*}
    \begin{split}
        \frac{d}{dt}\D[\unknown](t)
        &
        \le
        -\lambda
        \D[\unk](t)
        +
        \Cr{const:f, nablaD, nablaphi2}\|\nabla\log d\|_\infty
        \left(\D[\unk](t)\right)^3
        \\
        &\quad
        +
        \Cr{const:f, nablaD, nablaphi3}\|\nabla\log d\|_\infty
        \left(\D[\unk](t)\right)^{\frac{3}{2}}.
    \end{split}
 \end{equation*}

 Thus,  we obtain \eqref{eq:2.Second_time_derivative_F_Final} by taking
 constants as 
 \begin{equation*}
  \Cr{const:|nablamu|^2f^3/2}
   :=
   \Cr{const:f, nablaD, nablaphi2}\|\nabla\log d\|_\infty, 
   \quad
   \Cr{const:|nablamu|^2f^3}
   :=
   \Cr{const:f, nablaD, nablaphi3}\|\nabla\log d\|_\infty.
 \end{equation*}
\end{proof}

From differential inequality
\eqref{eq:2.Second_time_derivative_F_Final},  we use the following Gronwall type lemma. See~\cite{araki2025longtimebehaviorfreeenergy} for the complete proof.

\begin{lemma}
 \label{lem:2.Gronwall_Lemma}
 Let $g:[0, \infty)\rightarrow\R$ be a differentialble function. Assume
 there exist positive constants $\Cl{const:Gronwall1}, 
 \Cl{const:Gronwall2}$,  and $\Cl{const:Gronwall3}>0$ such that
 \begin{equation}
  \label{ineq:gronwall_of_g}
   \frac{d}{dt}g(t)
   \le
   -
   \Cr{const:Gronwall1} g(t)
   + 
   \Cr{const:Gronwall2}
   g(t)^{\frac{3}{2}}
   +
   \Cr{const:Gronwall3}
   g(t)^3
 \end{equation}
 for any $t>0$. Then,  there exist positive constants
 $\Cl{const:Gronwall_Initial},  \Cl{const:Gronwall_Coefficient}>0$
 depending only on $\Cr{const:Gronwall1},  \Cr{const:Gronwall2}$,  and
 $\Cr{const:Gronwall3}>0$ such that if
 $g(0)<\Cr{const:Gronwall_Initial}$,  then $g(t)\leq
 \Cr{const:Gronwall_Coefficient}e^{-\Cr{const:Gronwall1}t}$
 holds for any $t>0$.
\end{lemma}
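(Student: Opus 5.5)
The plan is a barrier (bootstrap) argument: compare $g$ with the linear decay $\dot y=-\Cr{const:Gronwall1}y$, and show that the nonlinear terms never get large enough to matter. Note first that any function obeying \eqref{ineq:gronwall_of_g} is understood to be nonnegative, since in our application $g=\D[\unk]\ge0$. We assume $g\ge 0$ throughout; otherwise the powers $g^{3/2}$ are not even defined.

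Choose $\varepsilon>0$ so small that
\begin{equation*}
\Cr{const:Gronwall2}\varepsilon^{1/2}+\Cr{const:Gronwall3}\varepsilon^{2}\le \tfrac{1}{2}\Cr{const:Gronwall1}.
\end{equation*}
Then whenever $0\le g(t)\le\varepsilon$, the inequality \eqref{ineq:gronwall_of_g} gives $g'(t)\le -\tfrac12\Cr{const:Gronwall1}g(t)\le 0$.

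Set $\Cr{const:Gronwall_Initial}:=\varepsilon$ and assume $g(0)<\varepsilon$. Let $T:=\sup\{t\ge0: g(s)<\varepsilon \text{ on } [0,t]\}$. Suppose $T<\infty$. By continuity $g(T)=\varepsilon$. On $[0,T]$, however, $g$ is nonincreasing by the previous step, so $g(T)\le g(0)<\varepsilon$, a contradiction. Hence $T=\infty$, and $g<\varepsilon$ for all $t\ge0$.

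The first-pass estimate gives only the rate $\Cr{const:Gronwall1}/2$, while the lemma claims the sharp rate $e^{-\Cr{const:Gronwall1}t}$; this upgrade is the main obstacle. From $g'\le-\tfrac12\Cr{const:Gronwall1}g$ and the Gronwall inequality we get
\begin{equation*}
g(t)\le \varepsilon e^{-\Cr{const:Gronwall1}t/2}.
\end{equation*}
Now write $g'\le -\Cr{const:Gronwall1}g + h(t)g$ with
\begin{equation*}
h(t):=\Cr{const:Gronwall2}g^{1/2}+\Cr{const:Gronwall3}g^2\le \Cr{const:Gronwall2}\varepsilon^{1/2}e^{-\Cr{const:Gronwall1}t/4}+\Cr{const:Gronwall3}\varepsilon^2e^{-\Cr{const:Gronwall1}t}.
\end{equation*}
This bound on $h$ is integrable on $(0,\infty)$, with
\begin{equation*}
\int_0^\infty h\,dt\le \frac{4\Cr{const:Gronwall2}\varepsilon^{1/2}}{\Cr{const:Gronwall1}}+\frac{\Cr{const:Gronwall3}\varepsilon^2}{\Cr{const:Gronwall1}}=:K.
\end{equation*}
Multiplying by the integrating factor $\exp\bigl(\Cr{const:Gronwall1}t-\int_0^t h\bigr)$ gives
\begin{equation*}
g(t)\le g(0)e^{K}e^{-\Cr{const:Gronwall1}t}\le \varepsilon e^{K}e^{-\Cr{const:Gronwall1}t}.
\end{equation*}
So $\Cr{const:Gronwall_Coefficient}:=\varepsilon e^{K}$ works, and it depends only on $\Cr{const:Gronwall1},\Cr{const:Gronwall2},\Cr{const:Gronwall3}$, as claimed.
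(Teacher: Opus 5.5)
Your proof is correct and complete. The paper gives no proof of this lemma to compare against: it states the lemma and refers to the companion work \cite{araki2025longtimebehaviorfreeenergy}. Your proposal therefore supplies a self-contained proof of a step the paper leaves out.

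The structure is sound. The barrier argument keeps $g$ in $[0,\varepsilon)$, where $\Cr{const:Gronwall2}g^{1/2}+\Cr{const:Gronwall3}g^{2}\le \Cr{const:Gronwall1}/2$, so you get the provisional rate $e^{-\Cr{const:Gronwall1}t/2}$. Feeding that rate back makes $h$ integrable on $(0,\infty)$, and the integrating factor then restores the full rate $\Cr{const:Gronwall1}$.

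Your argument also gives more than the statement asks. You obtain explicit constants and linear dependence on the initial value: $g(t)\le e^{K}g(0)e^{-\Cr{const:Gronwall1}t}$ with $K\le 4\bigl(\Cr{const:Gronwall2}\varepsilon^{1/2}+\Cr{const:Gronwall3}\varepsilon^{2}\bigr)/\Cr{const:Gronwall1}\le 2$. Hence one may take $\Cr{const:Gronwall_Initial}=\varepsilon$ and $\Cr{const:Gronwall_Coefficient}=e^{2}\varepsilon$.

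The only hypothesis you add is $g\ge 0$. It is implicit in the statement, since $g^{3/2}$ must make sense, and it holds for $g=\D[\unk]$.

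For comparison, you can reach the sharp rate in a single bootstrap on $G:=e^{\Cr{const:Gronwall1}t}g$. It satisfies $G'\le \Cr{const:Gronwall2}e^{-\Cr{const:Gronwall1}t/2}G^{3/2}+\Cr{const:Gronwall3}e^{-2\Cr{const:Gronwall1}t}G^{3}$. Integrating this and arguing by continuity keeps $G<2\varepsilon$ once $\varepsilon$ is small. That route is slightly shorter, while your two-step version is equally rigorous and gives the sharper dependence on $g(0)$.
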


Now,  we are in a position to demonstrate the main theorem.

\begin{proof}[Proof of Theorem \ref{thm:exp_decay}]
 
 Define $g(t)$ by
 \begin{equation}
  g(t):=
   \D[\unk](t)
   =
   \int_\Omega
   |\nabla\mu|^2\unk\, dx.
 \end{equation}
 From \eqref{eq:2.Second_time_derivative_F_Final}, we have
 \begin{equation}
  g(t)
   \leq
   -
   \lambda
   g(t)
   +
   \Cr{const:|nablamu|^2f^3/2}
   \left(g(t)\right)^3
   +
   \Cr{const:|nablamu|^2f^3}
   \left(g(t)\right)^{\frac{3}{2}}.
 \end{equation} 
 Then,  we obtain Theorem \ref{thm:exp_decay} by applying the Gronwall lemma
 (Lemma \ref{lem:2.Gronwall_Lemma}) with $\Cr{const:Gronwall1}=\lambda, 
 \Cr{const:Gronwall2}=\Cr{const:|nablamu|^2f^3}$,  and
 $\Cr{const:Gronwall3}=\Cr{const:|nablamu|^2f^3/2}$.
\end{proof}

\section{Numerical Analysis}
\label{sec:Numerical_Analysis}

In this section, we numerically compute $1$-dimensional \eqref{eq:Nonlinear-Fokker-Planck} in case $\Omega=(0,1)$ and observe the relationship between the spatial inhomogeneity of the diffusion and the decay rate of the energy dissipation. To compute \eqref{eq:Nonlinear-Fokker-Planck} numerically, we first consider the change of variable $\mu=d(x)\log \unknown+\phi(x)$, the potential function of the velocity. Then the potential function $\mu$ satisfies 
\begin{equation}
\label{eq:mu_s_equation}
    \mu_t
    =
    d(x)\mu_{xx}
    -
    (
    (\mu-\phi(x))(\log d(x))_x
    +
    \phi_x(x)
    )
    \mu_x
    +
    |\mu_x|^2, 
\end{equation}
for $0<x<1,\ t>0$, subjected to the boundary condition $\unk(x,t)\mu_x(x,t)=0$ at $x=0,1$ and $t>0$.
Note that \eqref{eq:mu_s_equation} is a homogeneous Neumann boundary value problem, hence we can use the standard forward Euler scheme: For $x\in (0,1)$
and small parameters $dt$, $dx>0$, 
\begin{equation}
    \label{eq:4.Forward_Euler_Scheme}
    \mu(x,t+dt)
    =
    \mu(x,t)
    +
    dt
    \left(
    d(x)\delta_x^2 \mu(x,t)
    +
    F(\mu(x,t),\delta_x\mu(x,t))
    \right),
\end{equation}
where $\delta_x, \delta_x^2$ are the first and second "central differences" subject to the Neumann boundary condition, that is,
\begin{equation*}
\begin{split}
    \delta_x f(x)&=\frac{f(x+dx)-f(x-dx)}{2dx},
    \\
    \delta_x^2 f(x)&=\frac{f(x+dx)+f(x-dx)-2f(x)}{(dx)^2},
\end{split}
\end{equation*}
and nonlinear term $F$ is given by
\begin{multline}
    F(\mu(x,t),\delta_x\mu(x,t))
    \\
    =
    -
    (
    (\mu(x,t)-\phi(x))(\log d(x))_x
    +
    \phi_x(x)
    )
    \delta_x\mu(x,t)
    +
    |\delta_x\mu(x,t)|^2.
\end{multline}
Since $\mu_x(0,t)=\mu_x(1,t)=0$ for $t>0$, we introduce virtual points $x=-dx$ and $x=1+dx$ such that
\begin{equation}
    \mu(dx,t)-\mu(-dx,t)=0,\qquad
    \mu(1+dx,t)-\mu(1-dx,t)=0,\qquad
\end{equation}
and the scheme \eqref{eq:4.Forward_Euler_Scheme} enforces the boundary $x=0,1$.

We define $\unknown$ from $\mu$ as
\begin{equation}
\label{eq:mu_to_unknown}
    \unknown(x,t)
    =
    \exp
    \left(
        \frac{\mu(x,t)-\phi(x)}{d(x)}
    \right)
\end{equation}
and compute the energy dissipation
\begin{equation}
\label{eq:1-dimesional_dissipation_func}
    \D[\unknown]=\int_0^1|\mu_x(x,t)|^2\unknown(x,t)\,dx
\end{equation}
by the trapezoid integral. We will seek the effect of the spatial inhomogeneity, so we use the spatial inhomogeneity of the diffusion as 
\begin{equation}
\label{eq:diffusion_coeff}
    d(x)=\dcenter+\dcoeff\sin(\dosc x)
\end{equation}
for some positive constants $\dcenter$, $\dcoeff$, and $\dosc$. The space mesh $dx$, time step size $dt$, and the potential $\phi$ are fixed in this simulation, given by
\begin{equation}
    dx
    =
    \frac{1}{100},
    \qquad
    dt
    =
    \frac{dx^2}{6}
    =
    \frac{1}{60000}
    ,
    \qquad
    \phi(x)
    =
    \frac{1}{2}x^2.
\end{equation}
The initial data $\mu_0(x)$ is given by
\begin{equation}
    \label{eq:4.InitialData}
    \mu_0(x)
    =
    1.0
    +
    0.5
    \cos(\mu_{\text{osc}}x),
\end{equation}
where $\mu_{\text{osc}}$ is a positive constant.

The decay rate of $\D[\unk](t)$ is calculated as follows. While $\D[\unk](t)> 10^{-10}$, compute the decay rate of $\D[\unk](t)$ as
\begin{equation}
    \label{eq:5.Def_DecayRate}
    a=
    \min_{t}
    \frac{\log_{10}\D[\unk](t)-\log_{10}\D[\unk](t+dt)}{dt},
\end{equation}
which means $\D[\unk](t)=\Cr{const:4.Coeff}10^{-at}$ for some positive constant $\Cl{const:4.Coeff}>0$. Hereafter, we call the number $a$ defined \eqref{eq:5.Def_DecayRate} ``decay rate of $\D[\unk](t)$.'' The numerical simulation tool is put on \cite{Julia_Numerical_Simulation_Tool}.

\subsection{Effect of the largeness of the diffusion coefficient}
\label{sec;4.dcenter}
We observe a decay rate of $\D[\unk](t)$ due to the difference in the largeness of the diffusion coefficients. 
To do this, we fix the initial data as \eqref{eq:4.InitialData}, $d_{\text{coeff}}=0.5$, and $d_{\text{osc}}=4\pi$, that is,
\begin{equation}
    d(x) = d_{\text{center}}+0.5\sin(4\pi x).
\end{equation}
We simulate \eqref{eq:4.Forward_Euler_Scheme} numerically with the parameter $d_{\text{center}}=1.0$, $1.5$, $2.0$, $2.5$, and $3.0$ for $\mu_{\text{osc}}=2\pi$ and $5\pi$. 
Table \ref{tab:d_center_2pi} shows the relationship among $d_{\text{center}}$, the maximum of $|(\log d(x))_x|$, and the minimum decay rate of $\D[\unk](t)$ in terms of $\mu_{\text{osc}}=2\pi$. Table \ref{tab:d_center_5pi} shows the one in terms of $\mu_{\text{osc}}=5\pi$.

\begin{table}[htbp]
    \centering
    \begin{tabular}{ccc}\hline
        $\dcenter$ & the maximum of $|(\log d(x))_x|$ & decay rate of $\D[\unk](t)$ \\ \hline
         1.0 & 7.253 &  5.421 \\
         1.5 & 4.439 & 11.765 \\
         2.0 & 3.245 & 16.660 \\
         2.5 & 2.562  & 21.210 \\
         3.0 & 2.122 & 25.638 \\\hline
    \end{tabular}
    \caption{For the case $\mu_{\text{osc}}=2\pi$ and as the parameter $d_{\text{center}}$, we calculate the maximum of $|(\log d(x))_x|$ and decay rate of $\D[\unk](t)$.}
    \label{tab:d_center_2pi}
\end{table}

\begin{table}[htbp]
    \centering
    \begin{tabular}{ccc}\hline
        $\dcenter$ & the maximum of $|(\log d(x))_x|$ & decay rate of $\D[\unk](t)$ \\ \hline
         1.0 & 7.253 & 4.848 \\
         1.5 & 4.439 & 11.604 \\
         2.0 & 3.245 & 16.610 \\
         2.5 & 2.562  & 21.190 \\
         3.0 & 2.122 & 25.627 \\\hline
    \end{tabular}
    \caption{For the case $\mu_{\text{osc}}=5\pi$ and as the parameter $d_{\text{center}}$, we calculate the maximum of $|(\log d(x))_x|$ and decay rate of $\D[\unk](t)$.}
    \label{tab:d_center_5pi}
\end{table}

Since $(\log d(x))_x=d_x(x)/d(x)$, $|(\log d(x))_x|$ is decreasing in terms of $\dcenter$. When $\dcenter$ increases, the diffusion is stronger, and therefore the dissipation function $\D[\unk](t)$ decays faster.

\subsection{Effect of the coefficient of the oscillation function}
\label{sec;4.dcoeff}

We next observe the decay rate of $\D[\unk](t)$ due to the difference of the coefficient of the oscillation. 
To do this, we fix the initial data as \eqref{eq:4.InitialData}, $d_{\text{center}}=1.0$, and $d_{\text{osc}}=4\pi$, that is,
\begin{equation}
    d(x) = 1.0+d_{\text{coeff}}\sin(4\pi x).
\end{equation}
We simulate \eqref{eq:4.Forward_Euler_Scheme} numerically with the parameter $d_{\text{coeff}}=0.0,$ $0.1$, $0.2$, $0.3$, $0.4$, and $0.5$ for $\mu_{\text{osc}}=2\pi$ and $5\pi$. 
Table \ref{tab:d_coeff_2pi} shows the relationship among $d_{\text{center}}$, the maximum of $|(\log d(x))_x|$, and the minimum decay rate of $\D[\unk](t)$ in terms of $\mu_{\text{osc}}=2\pi$. Table \ref{tab:d_coeff_5pi} shows the one in terms of $\mu_{\text{osc}}=5\pi$.

\begin{table}[htbp]
    \centering
    \begin{tabular}{ccc}\hline
        $\dcoeff$ & the maximum of $|(\log d(x))_x|$ & decay rate of $\D[\unk](t)$ \\ \hline
         0.0 & 0.000 & 8.952 \\
         0.1 & 1.263 & 8.903 \\
         0.2 & 2.562 & 8.547 \\
         0.3 & 3.946 & 7.880 \\
         0.4 & 5.481 & 6.847 \\
         0.5 & 7.253 & 5.421 \\\hline
    \end{tabular}
    \caption{For the case $\mu_{\text{osc}}=2\pi$ and as the parameter $\dcoeff$, we calculate the maximum of $|(\log d(x))_x|$ and decay rate of $\D[\unk](t)$.}
    \label{tab:d_coeff_2pi}
\end{table}

\begin{table}[htbp]
    \centering
    \begin{tabular}{ccc}\hline
        $\dcoeff$ & the maximum of $|(\log d(x))_x|$ & decay rate of $\D[\unk](t)$ \\ \hline
         0.0 & 0.000 & 8.952 \\
         0.1 & 1.263 & 8.895 \\
         0.2 & 2.562 & 8.490 \\
         0.3 & 3.946 & 7.696 \\
         0.4 & 5.481 & 6.478 \\
         0.5 & 7.253 & 4.848 \\\hline
    \end{tabular}
    \caption{For the case $\mu_{\text{osc}}=5\pi$ and as the parameter $\dcoeff$, we calculate the maximum of $|(\log d(x))_x|$ and decay rate of $\D[\unk](t)$.}
    \label{tab:d_coeff_5pi}
\end{table}

Since $(\log d(x))_x=d_x(x)/d(x)$, $|(\log d(x))_x|$ is increasing in terms of $\dcoeff$. When $\dcoeff$ increases, the spatial inhomogeneity is stronger
 and therefore the dissipation function $\D[\unk](t)$ decays slower.

\subsection{Effect of the oscillation}
\label{sec;4.dcosc}

We next observe the decay rate of $\D[\unk](t)$ due to the difference in the oscillation $d_{\mathrm{osc}}$. 
To do this, we fix the initial data as \eqref{eq:4.InitialData}, $d_{\text{center}}=1.0$, and $dcoeff=0.5$, that is,
\begin{equation}
    d(x) = 1.0+0.5\sin(\dosc x).
\end{equation}
We simulate \eqref{eq:4.Forward_Euler_Scheme} numerically with the parameter $\dosc=0.0,$ $0.5\pi$, $1.0\pi$, $1.5\pi$, $2.0\pi$, $2.5\pi$, $3.0\pi$, $3.5\pi$, and $4.0\pi$ for $\mu_{\text{osc}}=2\pi$ and $5\pi$. 
Table \ref{tab:d_osc_2pi} shows the relationship among $d_{\text{center}}$, the maximum of $|(\log d(x))_x|$, and the minimum decay rate of $\D[\unk](t)$ in terms of $\mu_{\text{osc}}=2\pi$. Table \ref{tab:d_osc_5pi} shows the one in terms of $\mu_{\text{osc}}=5\pi$.

\begin{table}[htbp]
    \centering
    \begin{tabular}{ccc}
    \hline
    $d_{\mathrm{osc}}$ & the maximum of $|(\log d(x))_x|$ & Decay rate of $\D[\unknown](t)$ \\
    \hline
    $0.0\pi$   & 0.000 & 8.952 \\
    $0.5\pi$ & 0.785 & 11.034  \\
    $1.0\pi$   & 1.571 & 9.325  \\
    $1.5\pi$ & 2.720 & 6.246 \\
    $2.0\pi$ & 3.627 & 8.625   \\
    $2.5\pi$ & 4.532 & 12.416 \\
    $3.0\pi$   & 5.441 & 14.774  \\
    $3.5\pi$ & 6.343 & 6.926   \\
    $4.0\pi$   & 7.253 & 5.421  \\
    \hline
    \end{tabular}
    \caption{For the case $\mu_{\text{osc}}=2\pi$ and as the parameter $\dcoeff$, we calculate the maximum of $|(\log d(x))_x|$ and decay rate of $\D[\unk](t)$.}
    \label{tab:d_osc_2pi}
\end{table}

\begin{table}[htbp]
    \centering
    \begin{tabular}{ccc}
    \hline
    $d_{\mathrm{osc}}$ & the maximum of $|(\log d(x))_x|$ & Decay rate of $\D[\unknown](t)$ \\
    \hline
    $0.0\pi$   & 0.000 & 8.952\\
    $0.5\pi$ & 0.785 & 11.039 \\
    $1.0\pi$   & 1.571 & 9.402 \\
    $1.5\pi$ & 2.720 & 6.452\\
    $2.0\pi$ & 3.627 & 8.623 \\
    $2.5\pi$ & 4.532 & 12.639 \\
    $3.0\pi$   & 5.441 & 15.830 \\
    $3.5\pi$ & 6.343 & 6.520  \\
    $4.0\pi$   & 7.253 & 4.848 \\
    \hline
    \end{tabular}
    \caption{For the case $\mu_{\text{osc}}=5\pi$ and as the parameter $\dcoeff$, we calculate the maximum of $|(\log d(x))_x|$ and decay rate of $\D[\unk](t)$.}
    \label{tab:d_osc_5pi}
\end{table}

First, $|(\log d(x))_x|$ increases in terms of $\dosc$. Unlike \S \ref{sec;4.dcenter} and \ref{sec;4.dcoeff}, there is no correlation between
$\dosc$ and the decay rate of $\D[\unk](t)$. We can explain why the decay rate is faster in $\dosc=0.5\pi$ and $1.0\pi$ than in the homogeneous case, since $d$ is greater than in the homogeneous case. However, we do not know why the decay rate is the fastest in $\dosc=3.0\pi$ and the slowest in $\dosc=4.0\pi$. Following the result, the largeness of $|(\log d(x))_x|$  might not be essential for the decay rate of $\D[\unk](t)$.

\section{Conclusion}

In this paper, we derived the nonlinear Fokker-Planck equation \eqref{eq:Nonlinear-Fokker-Planck} with spatial inhomogeneous diffusion under the natural boundary condition from the perspective of the law of energy dissipation. Focusing on the potential function of the velocity vector field $\mu=d(x)\log\unk+\phi(x)$, we showed a priori estimates and gave a sufficient condition of exponential decay for the dissipation function $\D[\unk](t)$. Finally, applying the forward Euler scheme, we numerically observed the interaction between spatial inhomogeneity of the diffusion and the decay of the dissipation function. In particular, we found that the effect of the oscillation of the spatial inhomogeneity is not simple for the decay of the dissipation function. It is an interesting problem to analyze the influences of spatial heterogeneity for asymptotics of the dissipation function.

\section*{Acknowledgments}

The work of Masashi Mizuno was partially supported by JSPS KAKENHI Grant Numbers JP22K03376, JP23H00085, and JP25KK0052.

\bibliographystyle{amsplain}
\bibliography{references}

\end{document}